\newcommand{\sob}{\mathfrak{so}(2n+1,\mathbb{C})}
\newcommand{\sod}{\mathfrak{so}(2n,\mathbb{C})}
\newcommand{\hobox}[3]{\draw (0+#1,0-#2) rectangle (1+#1,-1-#2)++(-0.5,+0.5) node {$ #3$};}
\newcommand{\domscale}{0.5}
\newcommand{\gl}{{\mathfrak{gl}}}
\newcommand{\gln}{{\gl_n}}
\newcommand{\spg}{{\mathfrak{sp}}}
\newcommand{\spn}{{\spg_{2n}}}
\newcommand{\GK}{\mathrm{GKdim}}
\newcommand{\pf}{\begin{proof}}
	\newcommand{\epf}{\end{proof}}
\newcommand{\eq}{\begin{equation}}
	\newcommand{\eeq}{\end{equation}}
\newcommand{\eqn}{\begin{equation*}}
	\newcommand{\eeqn}{\end{equation*}}
\newcommand{\frg}{\mathfrak{g}}
\newcommand{\frh}{\mathfrak{h}}
\newcommand{\frl}{\mathfrak{l}}
\newcommand{\frn}{\mathfrak{n}}
\newcommand{\frq}{\mathfrak{q}}
\newcommand{\fru}{\mathfrak{u}}
\theoremstyle{definition}
\newtheorem{Thm}[equation]{Theorem}
\newtheorem{prop}[equation]{Proposition}
\newtheorem{lem}[equation]{Lemma}
\newtheorem{Rem}[equation]{Remark}
\newtheorem{definition}[equation]{Definition}
\newtheorem{example}[equation]{Example}
\setlist[enumerate,1]{font=\textup,
	leftmargin=0mm,labelsep=0.2em,topsep=1mm,itemsep=1mm,itemindent=1em,listparindent=1em}
\setlist[enumerate,2]{font=\textup, leftmargin=3mm,labelsep=1mm,topsep=1mm,itemsep=1mm,itemindent=1em}
\setlist[enumerate,3]{font=\textup, leftmargin=4mm,labelsep=1mm,topsep=1mm,itemsep=1mm,itemindent=1em,listparindent=1em}
\setlist[enumerate,4]{font=\textup, leftmargin=5mm,labelsep=1mm,topsep=1mm,itemsep=1mm,itemindent=1em,listparindent=1em}
\numberwithin{equation}{section}
\begin{document}

\title[Reducibility of classical Lie algebras]{Reducibility of scalar  generalized Verma modules of minimal parabolic type}

\author{Jing Jiang}

\address[Jiang]{School of Mathematical Sciences, East China Normal University, Shanghai 200241, P. R. China}
\email{jjsd6514@163.com}

\thanks{Supported by the National Science Foundation of China (No. 12171344)}



\begin{abstract}
Let $\mathfrak{g}$ be a classical complex simple Lie algebra and $\mathfrak{q}$ be a parabolic subalgebra. Generalized Verma module $M$ is called a scalar generalized Verma module if it is induced from a one-dimensional representation of $\mathfrak{q}$. In this paper, we will determine the first diagonal-reducible point of scalar generalized Verma modules associated to minimal parabolic subalgebras by computing explicitly the Gelfand-Kirillov dimension of the corresponding highest weight modules.

\noindent{\textbf{2020 Mathematics Subject Classification:} 16S30, 17B10, 17B20, 22E47
}

\noindent{\textbf{Keywords:} Generalized Verma module; Gelfand-Kirillov dimension; Minimal parabolic subalgebra}
\end{abstract}

\maketitle

 \tableofcontents

\section{ Introduction}

	Let $\mathfrak{g}$ be a finite-dimensional complex simple Lie
algebra and $U(\frg)$ be its universal enveloping algebra.
Fix a Cartan subalgebra $\mathfrak{h}$ and denote by $\Delta$ the root system associated to $(\frg, \frh)$. Choose a positive root system
$\Delta^+\subset\Delta$ and a simple system $\Pi\subset\Delta^+$. Let $\rho$ be the half sum of roots in $\Delta^+$. Denote the triangular decomposition of $\mathfrak{g}$ with nilpotent radical $\mathfrak{n}=\mathop\oplus\limits_{\alpha\in\Delta^+}\frg_\alpha$ and its opposite nilradical $\bar{\mathfrak{n}}$ as $\mathfrak{g}=\bar{\mathfrak{n}}\oplus\mathfrak{h}\oplus{\mathfrak{n}}$.  Choose a subset $I\subset\Pi$ and  it generates a subsystem
$\Delta_I\subset\Delta$.
Let $\frq_I$ be the standard parabolic subalgebra corresponding to $I$ with Levi decomposition $\frq_I=\frl_I\oplus\fru_I$. When $I=\emptyset$, we have $\frq_{\emptyset}=\mathfrak{h}\oplus
\mathfrak{n}=\mathfrak{b}$. 

Let $\frq_I=\frl_I\oplus\fru_I$ and $\mathbb{C}(\lambda)$ be a finite-dimensional irreducible $\mathfrak{l}_I$-module with the highest weight $\lambda\in\frh^*$. The {\it generalized Verma module} $M_I(\lambda)$ is defined by
\[
M_I(\lambda):=U(\frg)\otimes_{U(\frq)}\mathbb{C}(\lambda).
\]
In particular, $M(\lambda)=M_{\emptyset}(\lambda)$ is called a {\it Verma module}.
The irreducible quotient of $M(\lambda)$ is denoted by $L(\lambda)$. It is also the irreducible quotient of $M_I(\lambda)$. In the case when $\dim(\mathbb{C}(\lambda))=1$, $M_I(\lambda)$ is called a {\it scalar generalized Verma module}.

The theory of highest weight modules over simple complex finite-dimensional Lie algebras rests on the original work of Verma, as presented in his seminal paper \cite{VD}. In that work, Verma introduced and studied a family of universal highest weight modules known as Verma modules, which has become fundamental to the field.

Several attempts have been made to extend the theory of Verma modules, and one of the most natural ways to achieve this is by generalizing Verma modules themselves. This can be accomplished in various ways, such as in \cite{AG1,AR,JL}. Generalized Verma modules (GVM) have been investigated from different points, and many of the properties of classical Verma modules have been either proven for GVM or generalized to them. For the study of GVM, see Mazorchuk’s work in \cite{MV}. In this article, he provided a comprehensive study of GVM using parabolic induction for a parabolic subalgebra of a simple Lie algebra, as well as gave an overview of the subject’s historical development.

The reducibility problem for generalized Verma modules is of great importance in representation theory and has close connections to several other problems as documented in \cite{BXiao, EHW, MH}. The crucial tool for solving this problem is Jantzen's criterion \cite{JC}, although it can be quite complicated to apply in practice. However, Kubo \cite{Ku} discovered some practical reducibility criteria for solving this problem for scalar generalized Verma modules associated with exceptional simple Lie algebras and certain maximal parabolic subalgebras. Using Kubo's result, He \cite{He} established reducibility for all scalar generalized Verma modules of Hermitian symmetric pairs. Then He-Kubo-Zierau \cite{HKZ} extended this to all scalar generalized Verma modules associated with maximal parabolic subalgebras. Recently, Bai-Xiao \cite{BXiao} resolved the reducibility problem for all generalized Verma modules of Hermitian symmetric pairs.

Gelfand-Kirillov dimension
plays a crucial role in characterizing algebraic structures with infinite dimensions. It has been used since Joseph's work in \cite{AJ1} to measure the size of Lie algebras and Lie group representations. A recent endeavor led by Bai-Xiao demonstrated that if the Gelfand-Kirillov dimension of its simple quotient of a scalar generalized Verma module is smaller than the dimension of $\fru$, then that module is reducible. ( Our approach does not rely on the simplification methods outlined in \cite{HKZ}.) We can utilize the same technique employed in \cite{BXX} to compute the GK dimension of scalar type highest weight modules.

	The paper is organized as follows. The necessary preliminaries for minimal parabolic subalgebra and Gelfand-Kirillov dimension are given in Section $2$. In Section $3$, we give the reducibility of scalar generalized Verma modules for type $A_n$, $B_n$, $C_n$ and $D_n$.

\section{Preliminaries}
In this section, we will give brief preliminaries on GK dimension, Young tableau and parabolic subalgebra. See \cite{VB} and \cite{Vo78} for more details.

\begin{definition}
	A parabolic subalgebra $\mathfrak{q}$ is said to be minimal if $I$ is the minimal nonempty set of $\Pi$. In other words, there is only one element in $I$.	
\end{definition}

Let $M$ be a finite generated $U(\mathfrak{g})$-module. Fix a finite dimensional generating subspace $M_0$ of $M$. Let $U_{n}(\mathfrak{g})$ be the standard filtration of $U(\mathfrak{g})$. Set $M_n=U_n(\mathfrak{g})\cdot M_0$ and
\(
\text{gr} (M)=\bigoplus\limits_{n=0}^{\infty} \text{gr}_n M,
\)
where $\text{gr}_n M=M_n/{M_{n-1}}$. Thus $\text{gr}(M)$ is a graded module of $\text{gr}(U(\mathfrak{g}))\simeq S(\mathfrak{g})$.

\begin{definition} The \textit{Gelfand-Kirillov dimension} of $M$  is defined by
	\begin{equation*}
		\operatorname{GKdim} M = \varlimsup\limits_{n\rightarrow \infty}\frac{\log\dim( U_n(\mathfrak{g})M_{0} )}{\log n}.
	\end{equation*}
\end{definition}

It is easy to see that the above definition is independent of the choice of $M_0$.

Then we have the following lemma.
\begin{lem}[{\cite[Lemma 4.4]{BX}}]\label{GKdown}
	For any $z\in\mathbb{C}$, we have
	\[
	\GK(L((z+1)\omega))\leq\GK(L(z\omega)).
	\]
	In particular, if $M_I(z\omega)$ is reducible, then $M_I((z+1)\omega)$ is also reducible.
\end{lem}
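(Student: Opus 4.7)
The plan is the standard tensor-product trick in category $\mathcal{O}$. Because the minimal parabolic corresponds to a single simple root, $I=\{\alpha_i\}$, scalar characters of $\frl_I$ correspond to weights vanishing on $\alpha_i^\vee$; the diagonal family $\{z\omega:z\in\Comp\}$ is carried by a fixed weight $\omega\in\frh^*$ with $\langle\omega,\alpha_i^\vee\rangle=0$. In the standard normalization we may take $\omega$ to be a dominant integral weight, so that $V:=L(\omega)$ is a finite-dimensional irreducible $\frg$-module.

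The key observation is that $L((z+1)\omega)$ appears as a subquotient of $L(z\omega)\otimes V$. Let $v_{z\omega}$ and $v_\omega$ denote highest weight vectors of $L(z\omega)$ and $V$, respectively. Since both are annihilated by $\frn$, so is $v_{z\omega}\otimes v_\omega$, which carries weight $(z+1)\omega$. Hence the submodule $N:=U(\frg)\cdot(v_{z\omega}\otimes v_\omega)$ is a highest weight module with highest weight $(z+1)\omega$, and therefore admits $L((z+1)\omega)$ as its unique irreducible quotient. By the monotonicity of Gelfand--Kirillov dimension under subquotients, together with the invariance of GK dimension under tensoring with a finite-dimensional module, I would conclude
\[
\GK L((z+1)\omega)\leq\GK N\leq\GK(L(z\omega)\otimes V)=\GK L(z\omega).
\]

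For the ``in particular'' statement, recall that a scalar generalized Verma module $M_I(\lambda)$ is free of rank one over $U(\bar{\fru}_I)$ by the PBW theorem, so $\GK M_I(\lambda)=\dim\fru_I$. If $M_I(z\omega)$ is reducible, then $L(z\omega)$ is a proper quotient; since the associated variety of $M_I(z\omega)$ is the irreducible affine space $\bar{\fru}_I$ while that of the proper quotient $L(z\omega)$ is a proper closed subvariety, one gets the strict inequality $\GK L(z\omega)<\dim\fru_I$. Chaining this with the first part yields
\[
\GK L((z+1)\omega)\leq\GK L(z\omega)<\dim\fru_I=\GK M_I((z+1)\omega),
\]
which forces $L((z+1)\omega)\neq M_I((z+1)\omega)$ and hence the reducibility of $M_I((z+1)\omega)$.

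The main technical point is the strict drop $\GK L(\lambda)<\GK M_I(\lambda)$ whenever $M_I(\lambda)$ is reducible; the rest of the argument is essentially formal. This strict drop rests on the fact that the associated variety of a scalar GVM is all of $\bar{\fru}_I$ (an irreducible affine space) while the associated variety of any proper quotient is a strictly smaller closed subvariety, so a dimension count applies. The verification that $v_{z\omega}\otimes v_\omega$ is a highest weight vector of the correct weight, and that tensoring with a finite-dimensional module preserves GK dimension, are standard once set up.
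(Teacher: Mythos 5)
Your proposal is correct, and since the paper offers no proof of this lemma (it is imported directly from \cite{BX}, Lemma 4.4), there is nothing in the text to diverge from: your argument --- realizing $L((z+1)\omega)$ as a subquotient of $L(z\omega)\otimes L(\omega)$ with $\omega=\widehat{\omega_p}$ dominant integral, using invariance of Gelfand--Kirillov dimension under tensoring with a finite-dimensional module, and deriving the strict inequality $\GK L(\lambda)<\dim\fru$ for reducible scalar generalized Verma modules from the irreducibility of the associated variety $\bar{\fru}$ --- is exactly the standard translation-type argument by which this result is established in the cited source. I see no gaps.
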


The following lemma is very useful in our proof of the main results.
\begin{lem}[{\cite[Theorem 1.1]{BX}}]\label{reducible}
	A scalar generalized Verma module $M_I(\lambda)$ is irreducible if and only if ${\rm GKdim}\:L(\lambda)=\dim(\mathfrak{u})$.
\end{lem}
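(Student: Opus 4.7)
The plan is to combine an explicit PBW calculation of $\GK M_I(\lambda)$ with a good-filtration argument for the irreducible quotient $L(\lambda)$.

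First I would pin down $\GK M_I(\lambda) = \dim \fru$ directly. Since $\dim \bbC(\lambda)=1$, the Poincar\'e--Birkhoff--Witt theorem identifies $M_I(\lambda) \cong U(\overline{\fru}_I)$ as a left $U(\overline{\fru}_I)$-module, where $\overline{\fru}_I$ denotes the nilradical opposite to $\fru_I$. Endow $M_I(\lambda)$ with the standard filtration $M_I(\lambda)_n := U_n(\frg)v_\lambda$. A short PBW rewriting shows this equals $U_n(\overline{\fru}_I)v_\lambda$: the $\fru_I$-action on $v_\lambda$ is zero and the $\frl_I$-action is scalar (via the character $\lambda$, which vanishes on $[\frl_I,\frl_I]$ because $M_I(\lambda)$ is scalar), so the contribution of $\frq_I$ descends trivially in $\gr$. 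Hence
\[
\gr M_I(\lambda)\;\cong\; S(\overline{\fru}_I)
\]
as an $S(\frg)$-module, with $\fru_I + \frl_I$ annihilating the cyclic generator. Reading off the support in $\frg^*$ yields $\GK M_I(\lambda) = \dim S(\overline{\fru}_I) = \dim \fru$.

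The forward implication of the equivalence is then immediate: if $M_I(\lambda)$ is irreducible then $L(\lambda)=M_I(\lambda)$, so $\GK L(\lambda) = \dim \fru$. For the converse I would argue the contrapositive. Assume $M_I(\lambda)$ is reducible and let $N$ be its unique maximal proper submodule, so $L(\lambda) = M_I(\lambda)/N$ with $N \neq 0$. The standard filtration on $M_I(\lambda)$ induces compatible good filtrations $N_n := N \cap M_I(\lambda)_n$ and $L(\lambda)_n := M_I(\lambda)_n/N_n$; passing to the associated graded of $0 \to N \to M_I(\lambda) \to L(\lambda) \to 0$ then gives
\[
\gr L(\lambda) \;\cong\; S(\overline{\fru}_I)/\caI, \qquad \caI := \gr N.
\]
Moreover $\caI \neq 0$: any nonzero $x \in N$ lies in some $M_I(\lambda)_k \setminus M_I(\lambda)_{k-1}$ and contributes a nonzero principal symbol to $\gr_k N \hookrightarrow \gr_k M_I(\lambda)$.

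The final step is the punchline: since $S(\overline{\fru}_I)$ is an integral domain of Krull dimension $\dim \fru$, a nonzero homogeneous ideal strictly lowers the Krull dimension of the quotient. Therefore $\GK L(\lambda) = \dim V(\mathrm{Ann}_{S(\frg)}\gr L(\lambda)) < \dim \fru$, which is the desired contrapositive. I expect the main technical obstacle to be verifying that $\fru_I$ and $\frl_I$ genuinely act by zero on $\gr M_I(\lambda)$, so that $\gr M_I(\lambda)$ is a \emph{cyclic} $S(\overline{\fru}_I)$-module and $\gr L(\lambda)$ is correspondingly forced into the form $S(\overline{\fru}_I)/\caI$; only then does the irreducibility of the affine variety $\overline{\fru}_I$ supply the strict dimension drop. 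Once that identification is set up carefully, the remainder is the standard fact that a proper closed subvariety of an irreducible variety has strictly smaller dimension.
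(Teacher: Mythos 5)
Your proof is correct, but there is nothing in the paper to compare it against: the statement is quoted as \cite[Theorem 1.1]{BX} and the paper supplies no proof of its own. Your argument --- $\GK M_I(\lambda)=\dim\fru$ via PBW, exactness of $\gr$ on $0\to N\to M_I(\lambda)\to L(\lambda)\to 0$ for the induced good filtrations, and the strict dimension drop $\dim V(\caI)<\dim\fru$ because $S(\overline{\fru}_I)$ is a domain --- is the standard one and, as far as I can tell, is essentially the proof in the cited source. The point you correctly flag as the technical crux is also exactly where the \emph{scalar} hypothesis enters: only because $\dim\mathbb{C}(\lambda)=1$ is $\gr M_I(\lambda)$ a cyclic $S(\frg)$-module killed by $\frq_I$, so that $\gr N$ is a genuine ideal of $S(\overline{\fru}_I)$ rather than just a submodule with possibly full support; your verification (that $\fru_I$ annihilates $v_\lambda$, that $\frl_I$ acts by the character, and that commutators do not raise filtration degree) closes that gap.
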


In \cite{BXX} and  \cite{BX1}, the authors  have found an algorithm to compute the  Gelfand-Kirillov dimension of highest weight modules of classical Lie algebras. We recall their algorithms here.

For  a totally ordered set $ \Gamma $, we  denote by $ \mathrm{Seq}_n (\Gamma)$ the set of sequences $ x=(x_1,x_2,\cdots, x_n) $   of length $ n $ with $ x_i\in\Gamma $.
We say $q=(q_1, \cdots, q_N)$ is the \textit{dual partition} of a partition $p=(p_1, \cdots, p_N)$ and write $q=p^t$ if $q_i$ is the length of $i$-th column of the Young diagram $p$. 
Let $p(x)$ be the shape of the Young tableau $P(x)$ obtained by applying Robinson-Schensted algorithm (\cite{BX1,VB}) to $x\in \mathrm{Seq}_n (\Gamma)$. For convenience, we set $q(x)=p(x)^t$.

\begin{example}Let $x=(5,4,1,3,2,6)$. Then by using RS-insertion algorithm, we have
	\[
	\small{\begin{tikzpicture}[scale=\domscale+0.25,baseline=-15pt]
			\hobox{0}{0}{5}
	\end{tikzpicture}}\to
	\small{\begin{tikzpicture}[scale=\domscale+0.25,baseline=-15pt]
			\hobox{0}{0}{4}
			\hobox{0}{1}{5}
	\end{tikzpicture}}\to
	\small{\begin{tikzpicture}[scale=\domscale+0.25,baseline=-15pt]
			\hobox{0}{0}{1}
			\hobox{0}{1}{4}
			\hobox{0}{2}{5}
	\end{tikzpicture}}	\to
	\small{\begin{tikzpicture}[scale=\domscale+0.25,baseline=-15pt]
			\hobox{0}{0}{1}
			\hobox{1}{0}{3}
			\hobox{0}{1}{4}
			\hobox{0}{2}{5}
	\end{tikzpicture}}\to
    	\small{\begin{tikzpicture}[scale=\domscale+0.25,baseline=-15pt]
    		\hobox{0}{0}{1}
    		\hobox{1}{0}{2}
    		\hobox{0}{1}{3}
    		\hobox{0}{2}{4}
    		\hobox{0}{3}{5}
    \end{tikzpicture}}\to  
	\small{\begin{tikzpicture}[scale=\domscale+0.25,baseline=-15pt]
			\hobox{0}{0}{1}
			\hobox{1}{0}{2}
			\hobox{2}{0}{6}
			\hobox{0}{1}{3}
			\hobox{0}{2}{4}
			\hobox{0}{3}{5}
	\end{tikzpicture}}
	=P(x).
	\]
	So the shape of the Young tableau $P(x)$ is $p(x)=(3,1,1,1)$.
\end{example}

For a Young diagram $P$, use $ (k,l) $ to denote the box in the $ k $-th row and the $ l $-th column.
We say the box $ (k,l) $ is \textit{even} (resp. \textit{odd}) if $ k+l $ is even (resp. odd). Let $ p_i ^{ev}$ (resp. $ p_i^{odd} $) be the number of even (resp. odd) boxes in the $ i $-th row of the Young diagram $ p $. 
One can easily check that \begin{equation}\label{eq:ev-od}
	p_i^{ev}=\begin{cases}
		\left\lceil \frac{p_i}{2} \right\rceil&\text{ if } i \text{ is odd},\\
		\left\lfloor \frac{p_i}{2} \right\rfloor&\text{ if } i \text{ is even},
	\end{cases}
	\quad p_i^{odd}=\begin{cases}
		\left\lfloor \frac{p_i}{2} \right\rfloor&\text{ if } i \text{ is odd},\\
		\left\lceil \frac{p_i}{2} \right\rceil&\text{ if } i \text{ is even}.
	\end{cases}
\end{equation}
Here for $ a\in \mathbb{R} $, $ \lfloor a \rfloor $ is the largest integer $ n $ such that $ n\leq a $, and $ \lceil a \rceil$ is the smallest integer $n$ such that $ n\geq a $. For convenience, we set
\begin{equation*}
	p^{ev}=(p_1^{ev},p_2^{ev},\cdots)\quad\mbox{and}\quad p^{odd}=(p_1^{odd},p_2^{odd},\cdots).
\end{equation*}

For $ x=(x_1,x_2,\cdots,x_n)\in \mathrm{Seq}_n (\Gamma) $, set
\begin{equation*}
	\begin{aligned}
		{x}^-=&(x_1,x_2,\cdots,x_{n-1}, x_n,-x_n,-x_{n-1},\cdots,-x_2,-x_1).
	\end{aligned}
\end{equation*}

\begin{Thm}[{\cite[Theorem 1.5]{BXX}}]\label{2}
	Let $\lambda+\rho=(\lambda_1, \lambda_2, \cdots, \lambda_n)\in \mathfrak{h}^*$ be  integral. Then
	
	\[	{\rm GKdim}\:L(\lambda)=\left\{
	\begin{array}{ll}
		\frac{n(n-1)}{2}-\sum\limits_{i\ge 1}(i-1)p(\lambda+\rho)_i &\text{~if~}\Delta =A_{n-1}\\	     	  	   
		n^2-\sum\limits_{i\ge 1}(i-1)p((\lambda+\rho)^-)_i^{odd} &\text{~if~}\Delta =B_{n}/C_{n}\\
		n^2-n-\sum\limits_{i\ge 1}(i-1)p((\lambda+\rho)^-)_i^{ev} &\text{~if~}\Delta =D_{n}
	\end{array}	
	\right.
	\]	
	
\end{Thm}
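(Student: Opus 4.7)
The plan is to compute $\GK L(\lambda)$ by identifying it with half the dimension of the associated variety $\caV(L(\lambda))\subset\ggs$. Under the integrality hypothesis on $\lambda+\rho$, Joseph's theory guarantees that $\caV(L(\lambda))$ is the closure of a single nilpotent coadjoint orbit $\caO_\lambda$, so $\GK L(\lambda)=\tfrac{1}{2}\dim\caO_\lambda$. The problem then reduces to extracting the Jordan type of $\caO_\lambda$ from the entries of $\lambda+\rho$ and substituting into the classical dimension formula for nilpotent orbits in each Lie type.

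For type $A_{n-1}$, I would invoke Joseph's identification of primitive ideals of $U(\frg)$ with two-sided Kazhdan--Lusztig cells, together with the Steinberg/Lusztig description of such cells in $S_n$: the cell of a Weyl group element $w$ is labeled by the shape of its Robinson--Schensted insertion tableau. Viewing $\lambda+\rho$ as a sequence $x\in\mathrm{Seq}_n(\bbR)$, the cell partition is $p(x)$, and $\caO_\lambda$ has Jordan type $p(x)$ with dimension $n^2-\sum_j q_j^2$ where $q=p(x)^t$. The elementary identity $\sum_j\binom{q_j}{2}=\sum_{i\geq 1}(i-1)p_i$, obtained by double-counting pairs of boxes in common columns, then rewrites $\tfrac{1}{2}\dim\caO_\lambda$ as $\tfrac{n(n-1)}{2}-\sum_{i\geq 1}(i-1)p(\lambda+\rho)_i$.

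For types $B_n,C_n,D_n$, the approach is to reduce to the symmetric-group calculation via the doubling $x\mapsto x^-$, which realizes a signed permutation of $\{1,\ldots,n\}$ as an unsigned permutation of $\{-n,\ldots,-1,1,\ldots,n\}$. Lusztig's cell theory for classical Weyl groups, combined with the Bala--Carter/Collingwood--McGovern parameterization of nilpotent orbits in $\frso_{2n+1},\frsp_{2n},\frso_{2n}$ by partitions of $2n+1$ or $2n$ with parity constraints on part multiplicities, assigns to the cell of $x^-$ a well-defined \emph{collapsed} partition $r$ that is the Jordan type of $\caO_\lambda$. In each classical type, half the orbit dimension equals $n^2$ (for $B_n,C_n$) or $n^2-n$ (for $D_n$) minus a sum controlled by $r^t$; the content of the theorem is that this sum is expressible through the parity statistics $p((\lambda+\rho)^-)^{odd}$ in types $B/C$ and $p((\lambda+\rho)^-)^{ev}$ in type $D$.

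The principal obstacle is establishing this last combinatorial identification. The antisymmetry of $x^-$ forces $p(x^-)$ to have highly structured row and column parity, and one must show that the type-specific collapse of $p(x^-)$ onto the correct orthogonal or symplectic parameter space is captured exactly by $p^{odd}$ or $p^{ev}$. Concretely, I would proceed box by box: determine which boxes of the RS-tableau of $x^-$ survive under the projection to admissible Jordan types, and verify the arithmetic identity $\sum_{i\geq 1}(i-1)p_i^{odd/ev}=\sum_j\binom{r^t_j}{2}$ up to the appropriate shift. The $n^2$ versus $n^2-n$ dichotomy traces back to the different rank-correction terms in the dimension formulas for $\frso_{2n+1}/\frsp_{2n}$ orbits versus $\frso_{2n}$ orbits, itself a consequence of the parity of the ambient dimension, and is the most delicate step to pin down case by case.
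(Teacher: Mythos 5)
A preliminary remark: the paper does not prove this statement at all --- it is imported verbatim from \cite[Theorem 1.5]{BXX} (together with \cite{BX1}) and used as a black box --- so there is no internal proof to compare yours against; what follows measures your outline against what a complete proof would require. Your overall strategy ($\operatorname{GKdim}L(\lambda)=\tfrac12\dim\mathcal{O}_\lambda$ via Joseph's theorem on the associated variety of the primitive ideal, identification of $\mathcal{O}_\lambda$ through Kazhdan--Lusztig cells and Robinson--Schensted, then the classical dimension formulas for nilpotent orbits) is the standard route and is close in spirit to the cited source, which packages the orbit-theoretic input through Lusztig's $a$-function in the form $\operatorname{GKdim}L(\lambda)=|\Delta^+|-a(w)$ for a suitable cell representative $w$; note that $|\Delta^+|$ equals $\tfrac{n(n-1)}{2}$, $n^2$, $n^2-n$ in types $A_{n-1}$, $B_n/C_n$, $D_n$, which already accounts for the leading terms. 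Your type $A$ argument is essentially complete, modulo one omission: when $\lambda+\rho$ is singular (repeated entries), the Duflo parametrization and the passage from a Weyl group element to the sequence $\lambda+\rho$ require a wall-crossing (translation functor) argument to justify that the RS shape of the weight sequence still computes the correct cell datum. This is routine but not free, and the theorem as stated allows singular integral weights.

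The genuine gap is exactly where you locate it, and it is the heart of the matter rather than a finishing touch. For types $B$, $C$, $D$ the whole content of the theorem is the identity between $\sum_{i\ge1}(i-1)p((\lambda+\rho)^-)_i^{odd}$ (resp.\ $p_i^{ev}$) and the correction term coming from the collapsed Jordan type $r$ of $\mathcal{O}_\lambda$. Announcing that you will ``proceed box by box'' and ``verify the arithmetic identity'' states the problem without solving it: the $B/C/D$-collapse of $p((\lambda+\rho)^-)$ onto an admissible partition is not a local operation on boxes but depends globally on the parity pattern of the column lengths, and the coincidence that $B_n$ and $C_n$ are governed by the \emph{same} statistic $p^{odd}$ --- despite having different admissibility constraints on part multiplicities and different Springer correspondences --- is a nontrivial assertion for which your outline supplies no mechanism. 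One also needs to know which orbit the cell of $x^-$ actually determines (special orbits, the behaviour of the Lusztig--Spaltenstein duality, and the fact that the relevant orbit is read off from the \emph{doubled} sequence in the first place), none of which is addressed. Until that combinatorial identification is carried out --- and it constitutes the bulk of the work in the cited reference --- the proposal is an accurate road map rather than a proof.
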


When $\lambda$ is non-integral,  we need some more notations before we give the algorithm of GK dimension.

We define three functions $ F_A $, $ F_B $, $ F_D $ as 
\begin{align*}
	F_A(x)&=\sum_{k\geq 1} (k-1) p_k,\\
	F_B(x)&=\sum_{k\geq 1} (k-1) p_k^{odd},\\
	F_D(x)&=\sum_{k\geq 1} (k-1) p_k^{ev},
\end{align*}
where $ p=p(x)=(p_1,p_2,\cdots) $.

\begin{definition}
	Fix $ \lambda+\rho=(\lambda_1,\cdots,\lambda_n) \in \mathfrak{h}^*$.
	
	For $ \mathfrak g= \gln$,  we define $[\lambda]$ to be the set of  maximal subsequences $ x $ of  $ \lambda+\rho $ such that any two entries of $ x $ has an integral difference. 
	
	For $\mathfrak{g}=\spn, \sob $ or $ \sod $, we define $[\lambda] $ to be the set of  maximal subsequences $ x $ of  $ \lambda+\rho $ such that any two entries of $ x $ have an integral  difference or sum. In this case, we set $ [\lambda]_1 $ (resp. $ [\lambda]_2 $) be to the subset of $ [\lambda] $ consisting of sequences with  all entries belonging to $ \mathbb{Z} $ (resp. $ \frac12+\mathbb{Z} $).
	Since there is at most one element in $[\lambda]_1 $ and $[\lambda]_2 $, we denote them by  $(\lambda+\rho)_{(0)}$ and $(\lambda+\rho)_{(\frac{1}{2})}$.
	We set $[\lambda]_{1,2}=[\lambda]_1\cup [\lambda]_2, \quad [\lambda]_3=[\lambda]\setminus[\lambda]_{1,2}$.
	
	\begin{example}
		Let $\lambda+\rho$=(7,5,3,3.5,2.5,1.5,2.3,1.3), then
		$$(\lambda+\rho)_{(0)}=(7,5,3),\:(\lambda+\rho)_{(\frac{1}{2})}=(3.5,2.5,1.5)\text{~and~}(\lambda+\rho)_{(0.3)}=(2.3,1.3)\in[\lambda]_3.$$
	\end{example}
	
\end{definition}

\begin{definition}
	Let  $ x=(\lambda_{i_1}, \lambda_{i_2},\cdots \lambda_{i_r})\in[\lambda]_3 $. Let $  y=(\lambda_{j_1}, \lambda_{j_2},\cdots, \lambda_{j_p}) $ be the maximal subsequence of $ x $ such that $ j_1=i_1 $ and the difference of any two entries of $ y$ is an integer. Let $ z= (\lambda_{k_1}, \lambda_{k_2},\cdots, \lambda_{k_q}) $ be the subsequence obtained by deleting $ y$ from $ x $, which is possible empty. 
	Define
	$$  \tilde{x}=(\lambda_{j_1}, \lambda_{j_2},\cdots, \lambda_{j_p}, -\lambda_{k_q}, -\lambda_{k_{q-1}},\cdots ,-\lambda_{k_1}).  $$
\end{definition}




\begin{Thm}[{{\cite[Theorem 4.6]{BX1}} and \cite[Theorem 5.7]{BXX}}]\label{GKdim}
	The GK dimension of  $ L(\lambda) $  can be computed as follows.
	\begin{enumerate}
		\item If $  \mathfrak{g}= \gln$, then
		\[
		{\rm GKdim}\:L(\lambda)=\frac{n(n-1)}{2}-\sum _{x\in [\lambda]} F_A(x).
		\]
		
		\item If $  \mathfrak{g}= \mathfrak{sp}(n,\mathbb{C})$, then
		\[
	{\rm GKdim}\:L(\lambda)=n^2- F_B((\lambda+\rho)_{(0)}^-)- F_D((\lambda+\rho)_{(\frac{1}{2})}^-)-\sum _{x\in [\lambda]_3} F_A(\tilde{x}).
		\]
		\item  If $  \mathfrak{g} = \mathfrak{so}(2n+1,\mathbb{C}) $, then
		\[
	{\rm GKdim}\:L(\lambda)=	n^2-F_B((\lambda+\rho)_{(0)}^-)-F_B((\lambda+\rho)_{(\frac{1}{2})}^-)-\sum _{x\in [\lambda]_3} F_A(\tilde{x}).
		\]
		\item  If $  \mathfrak{g} = \mathfrak{so}(2n,\mathbb{C}) $, then
		\[
		{\rm GKdim}\: L(\lambda)=n^2-n-F_D((\lambda+\rho)_{(0)}^-)-F_D((\lambda+\rho)_{(\frac{1}{2})}^-)-\sum _{x\in [\lambda]_3} F_A(\tilde{x}).
		\]
	\end{enumerate}
\end{Thm}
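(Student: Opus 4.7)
The strategy is to interpret $\GK\,L(\lambda)$ geometrically as half the dimension of a nilpotent orbit, and then to reduce the non-integral case to the integral one by way of a block equivalence of category $\caO$. By a theorem of Joseph, $\GK\,L(\lambda) = \tfrac{1}{2}\dim\caV(\mathrm{Ann}\,L(\lambda))$, and the associated variety is the closure of a single nilpotent orbit $\caO_\lambda \subset \mathfrak{g}^*$ determined by the primitive ideal of $L(\lambda)$. So the problem reduces to identifying this orbit combinatorially and computing its dimension.

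I would first settle the integral case (Theorem~\ref{2}) as a free-standing input. By work of Joseph in type $A$ and of Barbasch--Vogan in the other classical types, $\caO_\lambda$ is controlled by a partition obtained from a Robinson--Schensted-type algorithm: one feeds $\lambda+\rho$ to RS in type $A$, or the sign-doubled sequence $(\lambda+\rho)^-$ in types $B$, $C$, $D$, where the doubling encodes the extra sign reflections available in the Weyl group. The classical closed form for the dimension of a nilpotent orbit of given Jordan type (Collingwood--McGovern), after subtracting from $|\Delta^+|$ and rewriting using the numbers of even and odd boxes per row given in \eqref{eq:ev-od}, produces exactly the $F_A$, $F_B$, and $F_D$ expressions in the statement.

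For the non-integral case I would invoke the block equivalence of Jantzen and Soergel, which identifies the block of $L(\lambda)$ in category $\caO$ with the principal block for the integral root subsystem $\Delta_{[\lambda]}=\{\alpha\in\Delta:\langle\lambda+\rho,\alpha^\vee\rangle\in\bbZ\}$. For $\gln$ one has $\Delta_{[\lambda]}\cong\bigsqcup_{x\in[\lambda]}A_{|x|-1}$, giving the sum $\sum_{x\in[\lambda]}F_A(x)$. For $\spn,\sob,\sod$ the sign reflections $\lambda_i\mapsto-\lambda_i$ contribute a $B/C/D$ factor to $\Delta_{[\lambda]}$ for each of $(\lambda+\rho)_{(0)}$ and $(\lambda+\rho)_{(\frac12)}$ (the precise type depends on $\mathfrak g$ and on whether the reflection $\lambda\mapsto-\lambda$ remains integral, explaining why $\spn$ half-integral blocks are of type $D$), whereas each $x\in[\lambda]_3$ contributes a single type $A$ factor. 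For such an $x$ the entries split into a part $y$ whose entries are integrally related to $x_1$ and a part $z$ whose entries are integrally related to $-x_1$; the relevant $A$-subsystem then acts on the concatenated sequence $\tilde x=(y,-z^{\mathrm{rev}})$, which is precisely the reason $F_A(\tilde x)$ appears.

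The main obstacle is the bookkeeping for $[\lambda]_3$. One must check carefully that under the block equivalence the Cartan coordinates transport so that the RS algorithm applied to $\tilde x$ actually computes the shape of the orbit in the $\mathfrak{gl}$-block that $x$ corresponds to, and that reversing and negating the $z$-part of $x$ matches the Weyl group action of the larger group on the $-\lambda_i$ coordinates in $(\lambda+\rho)^-$. Once the factorization $W_{[\lambda]}=\prod (\text{type }B/C/D\text{ factors})\times\prod_{x\in[\lambda]_3}(\text{type }A\text{ factor})$ is established and the corresponding block equivalence is in place, the non-integral formula follows by additivity of $|\Delta^+|$ over the factors together with Theorem~\ref{2} applied to each integral block.
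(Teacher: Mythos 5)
First, note that the paper itself gives no proof of Theorem~\ref{GKdim}: it is imported verbatim from \cite[Theorem 4.6]{BX1} and \cite[Theorem 5.7]{BXX}, so your attempt can only be measured against the arguments in those references. Your architecture (associated varieties, Robinson--Schensted and Barbasch--Vogan combinatorics for the integral case, reduction to the integral root system for the rest) is the right one in spirit, but the central step of your non-integral reduction has a genuine gap. The Jantzen--Soergel equivalence identifies the block of $L(\lambda)$ with a block of category $\caO$ for a \emph{different} Lie algebra whose Weyl group is $W_{[\lambda]}$, and Gelfand--Kirillov dimension is not an invariant of the abstract category: it depends on the $U(\frg)$-module structure and on the ambient $|\Delta^+|$. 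The input actually needed (and used in \cite{BXX}) is Joseph's formula $\GK L(w\cdot\lambda)=|\Delta^+|-a_{W_{[\lambda]}}(w)$, where $a_{W_{[\lambda]}}$ is Lusztig's $a$-function of the integral Weyl group, or equivalently the Barbasch--Vogan determination of $\caV(\mathrm{Ann}\,L(\lambda))$ at non-integral infinitesimal character; the block equivalence only transports the cell combinatorics, not the dimension. As written, your final step is unsupported.

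Moreover, the claim that the formula ``follows by additivity of $|\Delta^+|$ over the factors together with Theorem~\ref{2} applied to each integral block'' is false as stated and would produce the wrong constant term: for $\gln$ the theorem reads $\frac{n(n-1)}{2}-\sum_{x\in[\lambda]}F_A(x)$ with the full $|\Delta^+(\frg)|=\frac{n(n-1)}{2}$ in front, whereas $\sum_{x\in[\lambda]}\binom{|x|}{2}<\binom{n}{2}$ whenever $[\lambda]$ has more than one class. What is additive over the factors of $W_{[\lambda]}$ is the $a$-invariant, i.e.\ the deficiency $|\Delta^+|-\GK L(\lambda)$, not $|\Delta^+|$ itself; summing the block-by-block GK dimensions does not reproduce the stated formula. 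The remaining ingredients you list --- Joseph's $\GK L=\frac12\dim\caV(\mathrm{Ann}\,L)$ for highest weight modules, the single-orbit property of associated varieties of primitive ideals, the identification of the integral subsystems (type $B$ twice for $\sob$, types $B$ and $D$ for $\spn$), and the role of $\tilde{x}$ for $x\in[\lambda]_3$ --- are correctly identified and consistent with the cited proofs, but they need precise attributions rather than the categorical transfer you invoke.
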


\section{Reducibility of scalar generalized Verma modules for classical Lie algebras}

Let $\mathfrak{g}$ be a finite-dimensional complex semisimple Lie
algebra and let $\mathfrak{b}=\mathfrak{h}\oplus\mathop\oplus\limits_{\alpha\in\Delta^+}\frg_\alpha$
be a fixed borel subalgebra of $\frg$. For a minimal parabolic subalgebra $\frq$, we appreciate that $\frq$ corresponds to the subsets  $\Pi\setminus\{\alpha_i\}_{i\ne p}$. It's easy to get that  $\lambda=z\eta$ for some $z\in\mathbb{C}$ and $\eta=\sum\limits_{i\neq p}k_i\omega_i$ by the Weyl dimension formula, where $\omega_i$ is the fundamental weight of simple root $\alpha_i$. In this paper, we suppose that $\eta=\sum\limits_{i\neq p}\omega_i=\widehat{\omega_p}$ and we call such a reducible point a {\it diagonal-reducible point}. 

	From Lemma \ref{GKdown}, the set of diagonal-reducible points of  a scalar generalized Verma module $M_I(z\widehat{\omega_p})$ is given in the following diagram:
	\begin{center}
		\setlength{\unitlength}{1mm}
		\begin{picture}(180,4)
		
		\line(1,0){60}
		\put(0,-1){$\bullet $}
		\put(-1,-5){$a$}
		\put(10,-1){$\bullet $}
		\put(20,-1){$\bullet $}
		
		\put(27,-0.5){$...~~...$ }
		\put(40,-1){$\bullet $}
		\put(47,-0.5){$$...~~...$$}

		\end{picture}
		\hspace{2cm}
		
	\end{center}
	where the diagonal-reducible points starting from $z=a\in \mathbb{R}$ are equally spaced at an interval of length $1$ and are like the form $a+\mathbb{Z}_{\geq 0}$. The point $a$  will be called the {\it first diagonal-reducible point} of $M_I(z\widehat{\omega_p})$.
	
	From  Lemma \ref{GKdown}, we only need to find the first diagonal-reducible point of the scalar generalized Verma module $M_{I}(z\widehat{\omega_p})$.

\begin{lem}\label{5}
	Let $w_i$ be the fundamental weight of $\alpha_i$ and $\rho$ be half the sum of the members of $\Delta^+$, then $\rho=\sum\limits_{i\ge 1}w_i$.
\end{lem}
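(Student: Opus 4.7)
The plan is to prove this classical identity by the standard characterization of $\rho$ via its pairings with simple coroots. First I would recall the defining property of the fundamental weights: $\langle \omega_i, \alpha_j^\vee \rangle = \delta_{ij}$ for all simple roots $\alpha_j$. Thus $\sum_{i\ge 1} \omega_i$ is the unique element of $\mathfrak{h}^*$ whose pairing with every simple coroot $\alpha_j^\vee$ equals $1$. Hence it suffices to show $\langle \rho, \alpha_j^\vee\rangle = 1$ for every simple root $\alpha_j$.

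Next I would establish the key fact that for each simple root $\alpha_j$, the simple reflection $s_j$ satisfies $s_j(\rho) = \rho - \alpha_j$. The standard argument is that $s_j$ permutes the set $\Delta^+ \setminus\{\alpha_j\}$ among itself while sending $\alpha_j$ to $-\alpha_j$. Writing $\rho = \tfrac{1}{2}\alpha_j + \tfrac{1}{2}\sum_{\beta\in\Delta^+\setminus\{\alpha_j\}} \beta$ and applying $s_j$ gives $s_j(\rho) = -\tfrac{1}{2}\alpha_j + \tfrac{1}{2}\sum_{\beta\in\Delta^+\setminus\{\alpha_j\}} \beta = \rho - \alpha_j$.

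On the other hand, by the definition of the simple reflection, $s_j(\rho) = \rho - \langle \rho, \alpha_j^\vee\rangle \alpha_j$. Comparing the two expressions yields $\langle \rho, \alpha_j^\vee\rangle = 1$, so $\rho$ and $\sum_i \omega_i$ have the same pairing with every $\alpha_j^\vee$. Since $\{\alpha_j^\vee\}$ spans $\mathfrak{h}$, this forces $\rho = \sum_{i\ge 1}\omega_i$.

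The result is essentially folklore and presents no real obstacle; the only mild subtlety is verifying that $s_j$ stabilizes $\Delta^+\setminus\{\alpha_j\}$, which is a well-known property of root systems and could alternatively be replaced by a direct type-by-type check for the classical Lie algebras $A_n$, $B_n$, $C_n$, $D_n$ treated in the remainder of the paper.
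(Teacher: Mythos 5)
Your proof is correct, but it takes a different route from the paper. The paper's entire proof is a citation: it asserts that the identity can be checked case by case in the reference \cite{KN}, presumably by writing out $\rho$ and the fundamental weights explicitly in coordinates for each classical type. You instead give the standard uniform argument: the fundamental weights are characterized by $\langle \omega_i,\alpha_j^\vee\rangle=\delta_{ij}$, the simple reflection $s_j$ permutes $\Delta^+\setminus\{\alpha_j\}$ and negates $\alpha_j$, hence $s_j(\rho)=\rho-\alpha_j$, and comparing with $s_j(\rho)=\rho-\langle\rho,\alpha_j^\vee\rangle\alpha_j$ gives $\langle\rho,\alpha_j^\vee\rangle=1$ for every $j$; since the simple coroots form a basis of $\mathfrak{h}$, this pins down $\rho=\sum_i\omega_i$. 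Every step you use (in particular the stability of $\Delta^+\setminus\{\alpha_j\}$ under $s_j$) is a standard fact about root systems, so the argument is complete. What your approach buys is a self-contained, type-free proof valid for any semisimple Lie algebra; what the paper's approach buys is brevity and consistency with its generally computational, case-by-case style, at the cost of deferring all content to an external reference.
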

\begin{proof}
	In \cite{KN}, we can get this conclusion case by case.
\end{proof}

\begin{lem}
	If $\frq$ is the minimal parabolic subalgebra, then $\dim(\fru)=\arrowvert\Delta^+\arrowvert-1$.
\end{lem}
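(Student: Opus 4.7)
The plan is to use the standard decomposition of the parabolic subalgebra directly. Recall from the introduction that for $I\subset\Pi$, the parabolic $\frq_I$ admits the Levi decomposition $\frq_I=\frl_I\oplus\fru_I$, where $\frl_I=\frh\oplus\bigoplus_{\alpha\in\Delta_I}\frg_\alpha$ and
\[
\fru_I=\bigoplus_{\alpha\in\Delta^+\setminus\Delta_I^+}\frg_\alpha.
\]
Since each root space is one-dimensional, this yields the general dimension formula $\dim\fru_I=|\Delta^+|-|\Delta_I^+|$.

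Next, I would specialize to the minimal case. By the definition of minimal parabolic recalled just after the Preliminaries heading, $I=\{\alpha_i\}$ for a single simple root $\alpha_i$. The subsystem $\Delta_I$ generated by the single simple root $\alpha_i$ is the rank-one root system $\{\pm\alpha_i\}$ of type $A_1$, so $\Delta_I^+=\{\alpha_i\}$ has exactly one element.

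Substituting $|\Delta_I^+|=1$ into the formula above yields $\dim\fru=|\Delta^+|-1$, as claimed. There is no real obstacle here; the only thing one must be careful about is to observe that $\Delta_I$ for $I=\{\alpha_i\}$ really contains no root other than $\pm\alpha_i$ (i.e.\ the subsystem generated by one simple root cannot produce any new positive root), which is immediate from the fact that all positive roots in $\Delta_I$ are nonnegative integral combinations of elements of $I$.
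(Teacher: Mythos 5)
Your proof is correct and follows essentially the same route as the paper's: both arguments reduce to the observation that the Levi factor of a minimal parabolic contains exactly one positive root, so that $\dim\fru=|\Delta^+|-|\Delta_I^+|=|\Delta^+|-1$. You merely spell out the root-space decomposition of $\fru_I$ and the fact that $\Delta_I=\{\pm\alpha_i\}$ more explicitly than the paper does.
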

\begin{proof}
	We know $\frq=\frl\oplus\fru$ and $\dim(\Delta^+(\frl))=1$, then the value of $\dim(\fru)$ is obvious.
\end{proof}

\begin{Rem}From Theorem \ref{GKdim}, we know that the GK dimension of a highest weight module $L(\lambda)$ only depends on the shape of some Young tableaux associated with $\lambda$. 	Sometimes the entries $\lambda_i$ in $ \lambda+\rho=(\lambda_1,\cdots,\lambda_n) \in \mathfrak{h}^*$ are complicated and if there is no ambiguity we usually use the positions of their entries to represent them in our Young tableau.
	
	For example, the Young tableau for
	$\lambda+\rho=(\lambda_1,\lambda_2,\lambda_3,\lambda_4)=(3,1,4,2)$ is 	\[P(\lambda+\rho)=
	{\begin{tikzpicture}[scale=\domscale+0.1,baseline=-19pt]
			\hobox{0}{0}{1}
			\hobox{1}{0}{2}
			\hobox{0}{1}{3}
			\hobox{1}{1}{4}
	\end{tikzpicture}}.
	\]
	We will use \[{\begin{tikzpicture}[scale=\domscale+0.1,baseline=-19pt]
			\hobox{0}{0}{2}
			\hobox{1}{0}{4}
			\hobox{0}{1}{1}
			\hobox{1}{1}{3}
	\end{tikzpicture}}\]
	to represent our Young tableau $P(\lambda+\rho)$.
\end{Rem}

\begin{prop}\label{4}
	Let $ \mathfrak{g}=\mathfrak{sl}(n,\mathbb{C}) $. $M_{I}(\lambda)$ is reducible if and only if 
\begin{enumerate}
	\item If $p=1$ or $p=n-1$, then $z\in \mathbb{Z}_{\ge 0}$.
	
	\item If $2\leq p\leq n-2$, then $z\in -1+\mathbb{Z}_{\ge 0}$.	
\end{enumerate}		
\end{prop}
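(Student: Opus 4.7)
The plan is to apply Lemma \ref{reducible}, which converts reducibility of $M_I(z\widehat{\omega_p})$ into the GK-dimensional inequality ${\rm GKdim}\,L(z\widehat{\omega_p})<\dim\mathfrak u$. For a minimal parabolic in type $A_{n-1}$ one has $\dim\mathfrak u=\binom{n}{2}-1$, so the entire problem reduces to a GK-dimension calculation via Theorems \ref{2} and \ref{GKdim}.

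First I would write $\lambda+\rho$ explicitly in the standard $\mathfrak{gl}_n$-coordinates. Combining Lemma \ref{5} with $\widehat{\omega_p}=\rho-\omega_p$ yields
\[
(\lambda+\rho)_j = \begin{cases} (z+1)(n-j)-z, & 1\le j\le p, \\ (z+1)(n-j), & p<j\le n, \end{cases}
\]
so consecutive gaps in $\lambda+\rho$ all equal $z+1$, except between positions $p$ and $p+1$ where the gap is $1$ independently of $z$. This almost-arithmetic structure of $\lambda+\rho$ drives every subsequent computation.

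For the forward direction I would verify reducibility at the two alleged first points. When $p=1$ and $z=0$, $\lambda=0$ and $L(0)=\mathbb C$ has ${\rm GKdim}=0$. When $p\ge 2$ and $z=-1$, the sequence degenerates to $\lambda+\rho=(1,\ldots,1,0,\ldots,0)$ with $p$ ones and $n-p$ zeros, whose RS-insertion produces the rectangle of shape $(n-p,p)$; Theorem \ref{2} then gives
\[
{\rm GKdim}\,L(\lambda) = \binom{n}{2}-p < \binom{n}{2}-1,
\]
the inequality being strict because $p\ge 2$. Lemma \ref{GKdown} then promotes these facts to every $z\in\mathbb Z_{\ge 0}$ (respectively $z\in\mathbb Z_{\ge -1}$).

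The converse direction is where the main obstacle lies: I must show ${\rm GKdim}\,L(\lambda)=\binom{n}{2}-1$ for every $z$ outside the claimed set. The integer predecessor cases $(p=1,\ z=-1)$ and $(p\ge 2,\ z=-2)$ are handled directly: a hand RS-insertion on the corresponding explicit sequences produces the hook $(n-1,1)$, hence $F_A=1$ and the required equality. The chief obstacle is the non-integer regime, which forces the use of Theorem \ref{GKdim}(1) and a case split on the integer-difference equivalence classes $[\lambda]$ of $\lambda+\rho$. For $z+1\notin\mathbb Q$, only the pair of positions $\{p,p+1\}$ forms a nontrivial class, contributing $F_A=1$; for rational non-integer $z+1=r/m$ the classes correspond to residues modulo $m$ with a one-step shift across position $p$, and a careful Young-shape calculation must confirm that $\sum_{x\in[\lambda]}F_A(x)=1$. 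This combinatorial bookkeeping is the crux of the proof.
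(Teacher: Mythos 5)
Your overall route---converting reducibility into the GK-dimension inequality via Lemma \ref{reducible}, exploiting the arithmetic structure of $\lambda+\rho$ (consecutive gaps $z+1$ except a gap of $1$ across positions $p,p+1$), computing shapes via Theorems \ref{2} and \ref{GKdim}, and invoking Lemma \ref{GKdown} for monotonicity---is exactly the paper's. But two of your steps do not survive scrutiny. First, at $z=-1$ with $p\ge2$ you take the RS shape of $(\underbrace{a,\dots,a}_{p},\underbrace{b,\dots,b}_{n-p})$, $a>b$, to be $(n-p,p)$ with $F_A=p$. The tie-breaking convention is forced by the formula itself: the constant sequence $\lambda+\rho=(0,\dots,0)$ must yield a single row, since $L(-\rho)=M(-\rho)$ has GK dimension $\binom{n}{2}$; so equal entries form increasing, not decreasing, subsequences, and Greene's theorem gives shape $\bigl(\max(p,n-p),\min(p,n-p)\bigr)$, hence $F_A=\min(p,n-p)$ and ${\rm GKdim}\,L(\lambda)=\binom{n}{2}-\min(p,n-p)$. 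Your strict inequality therefore fails when $p=n-1$: there $z=-1$ is an irreducible point, exactly as the diagram symmetry $\alpha_p\leftrightarrow\alpha_{n-p}$ demands (the case $p=n-1$ must behave like $p=1$). The paper's proof makes the same slip, writing $p(\lambda+\rho)=(n-p,p)$ ``whether $n-p\ge p$ or $n-p<p$''.

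Second, the step you single out as the crux---verifying $\sum_{x\in[\lambda]}F_A(x)=1$ for rational non-integral $z$---cannot be confirmed, because it is false. Take $n=4$, $p=2$, $z=-\tfrac12$: then $\lambda+\rho=(1,\tfrac12,-\tfrac12,-1)$, the integral classes are $(1,-1)$ and $(\tfrac12,-\tfrac12)$, each a decreasing pair with $F_A=1$, so ${\rm GKdim}\,L(\lambda)=6-2=4<5=\dim\mathfrak{u}$ and $M_I(\lambda)$ is reducible at $z=-\tfrac12\notin-1+\mathbb{Z}_{\ge0}$. (Equivalently, $\lambda+\rho$ is regular and dominant for its integral root system, which consists of two orthogonal roots, so the GK dimension is $|\Delta^+|-2$.) In general, for $z+1=r/m$ in lowest terms with $m\ge2$ and $z>-1$, each residue class of the shifted index modulo $m$ gives a decreasing subsequence of some length $s$ contributing $\binom{s}{2}$, and the total exceeds $1$ once $n$ is large relative to $m$. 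So carrying out your ``combinatorial bookkeeping'' honestly would disprove, not prove, the ``only if'' direction; the paper disposes of all non-integral $z$ in a single line that is valid only when $z+1$ is irrational. The statement itself needs to be amended (both at $p=n-1$ and at rational non-integral $z$) before either your argument or the paper's can be completed.
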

\begin{proof}
	Take $\mathfrak{g}=\mathfrak{sl}(\frn,\mathbb{C})$, and $\Delta^+(\frl)=\{\alpha_{p}\}$,$\alpha_p=e_p-e_{p+1}$.
	
	When $M_I(\lambda)$ is of scalar type, we know that $\lambda=z\eta $ for some $z\in\mathbb{R}$,
	and $\eta=(\underbrace{\frac{n-1}{2}-\frac{n-p}{n},\dots,\frac{n-(2p-1)}{2}-\frac{n-p}{n}}_{p},\underbrace{\frac{n-(2p+1)}{2}+\frac{p}{n},\dots,\frac{1-n}{2}+\frac{p}{n}}_{n-p})$.
	
	In \cite{EHW}, we know that $2\rho=(n-1,n-3,\dots,-n+3,-n+1)$, thus
	\begin{align*}
		\rho=(\frac{n-1}{2},\frac{n-3}{2},\dots,\frac{-n+3}{2},\frac{-n+1}{2}). 
	\end{align*}
\begin{enumerate}
	\item If $z\in\mathbb{Z}$, we will have the follows.
	    \begin{enumerate}
	    	\item When $z\ge 0$, then
	    	
	    	$\lambda+\rho$ is decreasing, so Young tableau has one column, by Lemma \ref{2}
	    	\begin{align}\label{3}
	    	{\rm GKdim}\: L(\lambda)\notag&=\frac{n(n-1)}{2}-(1+2+\dots+n-1)\notag\\&=\frac{n(n-1)}{2}-\frac{n(n-1)}{2}=0<\dim(\mathfrak{u}).
	    	\end{align}
	    By Lemma \ref{reducible} we obtain that $M_{I}(\lambda)$ is reducible.\\	
	    	
	    	\item When $z=-1$, then
	    	$$
	    		\lambda+\rho=(\underbrace{\frac{n-p}{n},\dots,\frac{n-p}{n}}_{p},\underbrace{-\frac{p}{n},\dots,\frac{p}{n}}_{n-p}).
	    	$$
	    	Whether $n-p\ge p$ or $n-p<p$
	    	$$
	    		p(\lambda+\rho)=(n-p,p).
	    	$$
	    	 By Lemma \ref{2}
	    $$
	    	{\rm GKdim}\:L(\lambda)=\frac{n(n-1)}{2}-(0\cdot (n-p)+p)=\frac{n(n-1)}{2}-p.
	    $$
If $p=1$ or $p=n-1$, ${\rm GKdim}\:L(\lambda)=\dim(\mathfrak{u})$, by Lemma \ref{reducible} $M_{I}(\lambda)$ is irreducible.

If $2\leq p \leq n-2$, ${\rm GKdim}\:L(\lambda)<\dim(\mathfrak{u})$, by Lemma \ref{reducible} $M_{I}(\lambda)$ is reducible.
	    	
	    	\item When $z=-2$, then
$$
	\lambda+\rho=(\underbrace{-\frac{n}{2}-\frac{2p}{n}+\frac{5}{2},\dots,-\frac{n}{2}-\frac{2p}{n}+\frac{2p+3}{2},}_{p}-\frac{n}{2}-\frac{2p}{n}+\frac{2p+1}{2},\dots,\frac{n}{2}-\frac{2p}{n}-\frac{1}{2}).
$$	    	
\[
\tiny{\begin{tikzpicture}[scale=\domscale+0.25,baseline=-15pt]
	\hobox{0}{0}{1}
	\hobox{1}{0}{2}
	\hobox{2}{0}{\cdots}
	\hobox{3}{0}{p} 
\end{tikzpicture}\to 
\begin{tikzpicture}[scale=\domscale+0.25,baseline=-20pt]
	\hobox{0}{0}{1}
    \hobox{1}{0}{2}
    \hobox{2}{0}{\cdots}
    \hobox{3}{0}{n}
    \hobox{0}{1}{p}
\end{tikzpicture}}=P(\lambda)
\]	 
$$
	\lambda+\rho=(n-1,1),
$$
and
\begin{align*}
	{\rm GKdim}\:L(\lambda)&=\frac{n(n-1)}{2}-1=\dim(\mathfrak{u}).
\end{align*}	 	
	    \end{enumerate}

 By Lemma \ref{reducible} $M_{I}(\lambda)$ is irreducible. \\
 
    \item If $z\notin\mathbb{Z}$, we have the follows.
    
The difference between the first $p$ components of $\lambda+\rho$ and the last $n-p$ components of $\lambda+\rho$ is $z+1$. Then
$$
{\rm GKdim}\: L(\lambda)=\frac{n(n-1)}{2}-\frac{2(2-1)}{2}=\frac{n(n-1)}{2}-1=\dim(\mathfrak{u}).
$$  	
\end{enumerate}

By Lemma \ref{reducible} $M_{I}(\lambda)$ is irreducible. And we have completed the proof of the proposition \ref{4}.
\end{proof}

\begin{prop}\label{7}
	Let $ \mathfrak{g}=\mathfrak{so}(2n+1,\mathbb{C})$  $(n>2)$. $M_{I}(\lambda)$ is reducible if and only if
\begin{enumerate}
	\item When $z\in\mathbb{Z}$, then
	\begin{enumerate}
	\item If $p=1$, then $z\in\mathbb{Z}_{\ge 0}$.
	
	\item If $1<p\le n$, then $z\in -1+\mathbb{Z}_{\ge 0}$.	
\end{enumerate}
\item When $z\in\dfrac{1}{2}+\mathbb{Z}$, then
\begin{enumerate}	
	\item If $n$ is even, then $z\in-\dfrac{1}{2}+\mathbb{Z}_{\ge 0}$.
	
	\item If $n$ is odd, then $z\in\dfrac{1}{2}+\mathbb{Z}_{\ge 0}$ for $n=3$ and $z\in-\dfrac{1}{2}+\mathbb{Z}_{\ge 0}$ for $n>3$.
\end{enumerate}

\end{enumerate}
\end{prop}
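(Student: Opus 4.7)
The plan is to follow the template of Proposition \ref{4}: apply Theorem \ref{GKdim}(3) to compute $\GK L(z\widehat{\omega_p})$ and invoke Lemma \ref{reducible} together with $\dim\fru=|\Delta^+|-1=n^2-1$. Reducibility at $z$ thus reduces to the inequality
\[
F_B((\lambda+\rho)_{(0)}^-)+F_B((\lambda+\rho)_{(1/2)}^-)+\sum_{x\in[\lambda]_3}F_A(\tilde x)>1,
\]
and the task is to locate, within each congruence class of $z$, the boundary value where this sum first exceeds $1$, and then to use Lemma \ref{GKdown} to propagate the reducibility. The basic data are $\rho=(n-\tfrac12,\dots,\tfrac12)$ and $\widehat{\omega_p}=\rho-\omega_p$: for $p<n$ the latter has half-integer entries, while for $p=n$ it is integer valued because $\omega_n$ is the spin weight. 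Thus the arithmetic nature of $\lambda+\rho$ splits according to both the parity class of $z$ and whether $p=n$, which is the source of the case distinction in the statement.

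For $z\in\mathbb{Z}$ all entries of $\lambda+\rho$ are half-integers, so $(\lambda+\rho)_{(0)}$ is empty and the formula collapses to $\GK L(\lambda)=n^2-F_B((\lambda+\rho)^-)$. I would check the candidate boundaries directly: at $z=-1$ one finds $\lambda+\rho=(1,\ldots,1,0,\ldots,0)$ with $p$ ones and $n-p$ zeros when $p<n$, and analogously $\lambda+\rho=(\tfrac12,\ldots,\tfrac12)$ when $p=n$. Greene's theorem reads off the three-row (respectively two-row) shape of $(\lambda+\rho)^-$ at once, and a short arithmetic using \eqref{eq:ev-od} gives $F_B=1$ exactly when $p=1$ and $F_B>1$ otherwise, which together with the verification that $z=-2$ is irreducible establishes (1)(a) and (1)(b). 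For $z\in\tfrac12+\mathbb{Z}$ the arithmetic nature of $\lambda+\rho$ depends on $p$: for $p=n$ the entries alternate between integer and half-integer classes so that both $F_B$-terms contribute, while for $p<n$ they are quarter-integers forming one class in $[\lambda]_3$ and the correction is read off from $F_A(\tilde x)$. At the candidate value $z=-\tfrac12$ each relevant subsequence is an arithmetic progression whose $(\cdot)^-$ doubling produces a near-column Young diagram, and the total correction assembles into an expression involving $\lceil n/2\rceil$ and $\lfloor n/2\rfloor$; whether this total exceeds $1$ at $z=-\tfrac12$ or only at a later value in the progression is controlled by the parity of $n$, which produces the dichotomy (2)(a)--(b). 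For $z\notin\tfrac12\mathbb{Z}$, each class in $[\lambda]$ has too few members for the correction to exceed $1$, so $\GK L(\lambda)=n^2-1$ and $M_I(\lambda)$ is irreducible.

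The main technical obstacle I anticipate is the RSK analysis in the half-integer case. Once $(\lambda+\rho)^-$ is written out as a doubled arithmetic progression with a small set of repetitions at the midpoint, one must determine the exact Young tableau shape in order to read off $p^{odd}$; the value of $F_B$ depends sensitively on whether $\lceil n/2\rceil$ or $\lfloor n/2\rfloor$ lands in an odd-indexed row versus an even-indexed row, and this is precisely where the parity-based split in (2) originates. Keeping the bookkeeping straight---especially distinguishing the $p=n$ subcase, where both $(\lambda+\rho)_{(0)}^-$ and $(\lambda+\rho)_{(1/2)}^-$ contribute, from the $p<n$ subcase, which passes through $[\lambda]_3$ and $\tilde x$---is the most delicate step of the argument.
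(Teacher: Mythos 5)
Your overall strategy coincides with the paper's: compute $\GK L(z\widehat{\omega_p})$ via Theorem \ref{GKdim}(3), compare with $\dim\fru=n^2-1$ using Lemma \ref{reducible}, test the candidate boundary points in each congruence class of $z$, and propagate with Lemma \ref{GKdown}. Your treatment of the integral case matches the paper in substance (the three-row shape at $z=-1$, the conclusion that the correction equals $1$ exactly when $p=1$, the check at $z=-2$), and your observation that for $z\in\tfrac12+\mathbb{Z}$ the cases $p=n$ and $p<n$ feed into different parts of the algorithm ($F_B$ on the two congruence classes versus $F_A(\tilde x)$ for a single class in $[\lambda]_3$) is correct and, if anything, more careful than the paper, which silently treats the quarter-integer entries as two separate classes. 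There is, however, one internal inconsistency: you claim that for $z\in\mathbb{Z}$ all entries of $\lambda+\rho$ are half-integers so that $(\lambda+\rho)_{(0)}$ is empty, and two lines later you correctly write $\lambda+\rho=(1,\dots,1,0,\dots,0)$ at $z=-1$. In fact for $p<n$ the entries are integral when $z$ is odd and half-integral when $z$ is even; this happens to be harmless for $B_n$ because both classes are processed by $F_B$, but the claim as stated is false.

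The genuine gap is that the half-integer case, which carries the entire content of item (2) and which you yourself flag as the main obstacle, is not actually carried out. The dichotomy in (2) is not a parity phenomenon in the way you suggest: at $z=-\tfrac12$ the correction evaluates to $\tfrac14 n^2-\tfrac12 n$ or $\tfrac14 n^2-\tfrac12 n+1$ when $n$ is even and to $\tfrac14(n-1)^2$ when $n$ is odd, so the only exceptional case is $n=3$, where $\tfrac14(n-1)^2=1$ makes $z=-\tfrac12$ an irreducible point; for every $n\geq 4$, even or odd, the correction exceeds $1$ and $z=-\tfrac12$ is reducible. Your sketch does not produce these numbers, and without them the statement of (2) cannot be verified. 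In addition, you never mention checking $z=-\tfrac32$: establishing that the correction equals exactly $1$ there (as the paper does) is indispensable, since only then does the contrapositive of Lemma \ref{GKdown} rule out reducibility at all half-integers below $-\tfrac12$ and complete the ``only if'' direction of (2). As written, your argument establishes reducibility at the claimed points but not that they are the \emph{first} diagonal-reducible points in the half-integral family.
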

\begin{proof}
	Take $\mathfrak{g}=\mathfrak{so}(2n+1,\mathbb{C})$, and $\Delta^+(\frl)=\{\alpha_{p}\}$.
\begin{enumerate}
	\item If $p<n$, then by Lemma \ref{5} we can get that
\begin{align*}
	\eta&=\rho-w_p\\&=(n-\frac{3}{2},\dots,n-p-\frac{1}{2},n-p-\frac{1}{2},\dots,\frac{1}{2}),
\end{align*}	
\begin{align*}
	\lambda+\rho=((n-\frac{3}{2})z+n-\frac{1}{2},\dots,(n-p-\frac{1}{2})z+n-p+\frac{1}{2},(n-p-\frac{1}{2})z+n-p-\frac{1}{2},\dots,\frac{1}{2}z+\frac{1}{2}).
\end{align*}
\begin{enumerate}
	\item If $z\in\mathbb{Z}$, we will have the follows.
	    \begin{enumerate}
	    	\item When $z\ge 0$, $M_{I}(\lambda)$ is reducible by (\ref{3}).
	    
	     	\item When $z=-1$, then
	    \begin{align*}
	    	(\lambda+\rho)^-=(\underbrace{1,\dots,1}_{p},\underbrace{0,0,\dots,0}_{2n-2p},\underbrace{-1,\dots,-1}_{p}).
	    \end{align*}
    
    \begin{enumerate}
    	\item $n-p\ge p$ and $p$ is even, we have
    $$
    p((\lambda+\rho)^-)^{odd}=(n-p,\frac{p}{2},\frac{p}{2}),
    $$
$$
{\rm GKdim}\:L(\lambda)=n^2-(0\cdot (n-p)+\frac{p}{2}+2\cdot \frac{p}{2})=n^2-\frac{3p}{2}<n^2-1=\dim(\mathfrak{u}).
$$ 
   	
    \item $n-p\ge p$ and $p$ is odd, we have
$$
    	p((\lambda+\rho)^-)^{odd}=(n-p,\frac{p+1}{2},\frac{p-1}{2}),
  $$
$$
    {\rm GKdim}\:L(\lambda)=n^2-(0\cdot (n-p)+\frac{p+1}{2}+2\cdot \frac{p-1}{2})=n^2-\frac{3p}{2}+\frac{1}{2}.
 $$ 
 	
    If $p=1$, ${\rm GKdim}\:L(\lambda)=\dim(\mathfrak{u})$. 
    
    If $p>1$, ${\rm GKdim}\:L(\lambda)<\dim(\mathfrak{u})$. By Lemma \ref{reducible} we can get the conclusion.
    	
	\item $n-p<p$ and $p$ is even, we have    
	 \begin{align*}
		p((\lambda+\rho)^-)^{odd}=(\frac{p}{2},\frac{p}{2},n-p),
	\end{align*}
	\begin{align*}
		{\rm GKdim}\:L(\lambda)&=n^2-(0\cdot \frac{p}{2}+1\cdot \frac{p}{2}+2(n-p))\\&=n^2-\frac{p}{2}-2(n-p)<n^2-1=\dim(\mathfrak{u}).
	\end{align*}      
	    
	    \item $n-p<p$ and $p$ is odd, we have	
	     \begin{align*}
	    	p((\lambda+\rho)^-)^{odd}=(\frac{p-1}{2},\frac{p+1}{2},n-p),
	    \end{align*}
	    \begin{align*}
	    	{\rm GKdim}\:L(\lambda)&=n^2-(0\cdot \frac{p-1}{2}+1\cdot \frac{p+1}{2}+2(n-p))\\&=n^2-\frac{p+1}{2}-2(n-p)<n^2-1=\dim(\mathfrak{u}).
	    \end{align*} 
    \end{enumerate}
    \item When $z=-2$, then
    \begin{align*}
    	\lambda+\rho=(-n+\frac{5}{2},\dots,-n+p+\frac{1}{2},\dots,-\frac{1}{2}).
    \end{align*}
    \[
    \tiny{\begin{tikzpicture}[scale=\domscale+0.25,baseline=-15pt]
    	\hobox{0}{0}{1}
    	\hobox{1}{0}{2}
    	\hobox{2}{0}{\cdots}
    	\hobox{3}{0}{p} 
    \end{tikzpicture}\to 
    \begin{tikzpicture}[scale=\domscale+0.25,baseline=-20pt]
    	\hobox{0}{0}{1}
    	\hobox{1}{0}{2}
    	\hobox{2}{0}{\cdots}
    	\hobox{3}{0}{n}
    	\hobox{0}{1}{p}
    \end{tikzpicture}\to 
    \begin{tikzpicture}[scale=\domscale+0.25,baseline=-20pt]
        \hobox{0}{0}{1}
        \hobox{1}{0}{2}
        \hobox{2}{0}{\cdots}
        \hobox{3}{0}{2n}
        \hobox{0}{1}{p}
        \hobox{1}{1}{2n-p}
    \end{tikzpicture}}=P(\lambda)
    \]	 
 $$
    	p((\lambda+\rho)^-)^{odd}=(n-1,1),
$$

\end{enumerate}

and ${\rm GKdim} L(\lambda)=n^2-(0\cdot (n-1)+1\cdot 1=n^2-1=\dim(\mathfrak{u}).$ Hence $z=-2$ is an irreducible point.\\  

\item $z\in\dfrac{1}{2}+\mathbb{Z}$, we will have the follows.
\begin{enumerate}
	\item When $z> -\dfrac{1}{2}$, we can get that $M_{I}(\lambda)$ is reducible.
	
	\item When $z=-\dfrac{1}{2}$, we will have 
	$$(\lambda+\rho)_{(z)}=\lambda+\rho=(\dfrac{1}{2}n+\dfrac{1}{4},\dfrac{1}{2}n-\dfrac{1}{4},\cdots,\dfrac{1}{2}n-\dfrac{1}{2}p+\dfrac{3}{4},\dfrac{1}{2}n-\dfrac{1}{2}p-\dfrac{1}{4},\cdots,\dfrac{1}{4}).$$
		We divide the discussion into four cases:
\begin{enumerate}	
	\item $n$ is even and $p$ is odd, then
	$$(\lambda+\rho)_{(z_1)}=(\dfrac{1}{2}n+\dfrac{1}{4},\dfrac{1}{2}n-\dfrac{3}{4},\cdots,\dfrac{1}{2}n-\dfrac{1}{2}p+\dfrac{3}{4},\dfrac{1}{2}n-\dfrac{1}{2}p-\dfrac{1}{4},\cdots,\dfrac{1}{4}).$$		
		$$(\lambda+\rho)_{(z_2)}=(\dfrac{1}{2}n-\dfrac{1}{4},\dfrac{1}{2}n-\dfrac{5}{4},\cdots,\dfrac{1}{2}n-\dfrac{1}{2}p+\dfrac{5}{4},\dfrac{1}{2}n-\dfrac{1}{2}p-\dfrac{3}{4},\cdots,\dfrac{3}{4}).$$
		\begin{align}\label{8}
			{\rm GKdim}\:L(\lambda)\notag&=n^2-(1+2+\cdots+\dfrac{1}{2}n)-(1+2+\cdots+\dfrac{1}{2}n-2)\\&=n^2-(\dfrac{1}{4}n^2-\dfrac{1}{2}n)-1.
		\end{align}	
	
\item $n$ is even and $p$ is even, then	
	$$(\lambda+\rho)_{(z_1)}=(\dfrac{1}{2}n+\dfrac{1}{4},\dfrac{1}{2}n-\dfrac{3}{4},\cdots,\dfrac{1}{2}n-\dfrac{1}{2}p+\dfrac{5}{4},\dfrac{1}{2}n-\dfrac{1}{2}p-\dfrac{3}{4},\cdots,\dfrac{1}{4}).$$		
$$(\lambda+\rho)_{(z_2)}=(\dfrac{1}{2}n-\dfrac{1}{4},\dfrac{1}{2}n-\dfrac{5}{4},\cdots,\dfrac{1}{2}n-\dfrac{1}{2}p+\dfrac{3}{4},\dfrac{1}{2}n-\dfrac{1}{2}p-\dfrac{1}{4},\cdots,\dfrac{3}{4}).$$	
\begin{align*}
	{\rm GKdim}\:L(\lambda)&=n^2-2(1+2+\cdots+\dfrac{1}{2}n-1)\\&=n^2-(\dfrac{1}{4}n^2-\dfrac{1}{2}n).
\end{align*}			
	
\item $n$ is odd and $p$ is odd, then	
$$(\lambda+\rho)_{(z_1)}=(\dfrac{1}{2}n+\dfrac{1}{4},\dfrac{1}{2}n-\dfrac{3}{4},\cdots,\dfrac{1}{2}n-\dfrac{1}{2}p+\dfrac{3}{4},\dfrac{1}{2}n-\dfrac{1}{2}p-\dfrac{1}{4},\cdots,\dfrac{1}{4}).$$		
$$(\lambda+\rho)_{(z_2)}=(\dfrac{1}{2}n-\dfrac{1}{4},\dfrac{1}{2}n-\dfrac{5}{4},\cdots,\dfrac{1}{2}n-\dfrac{1}{2}p+\dfrac{5}{4},\dfrac{1}{2}n-\dfrac{1}{2}p-\dfrac{3}{4},\cdots,\dfrac{1}{4}).$$	
\begin{align*}
	{\rm GKdim}\:L(\lambda)&=n^2-(1+2+\cdots+\dfrac{1}{2}n-\dfrac{1}{2})-(1+2+\cdots+\dfrac{1}{2}n-\dfrac{3}{2})\\&=n^2-\dfrac{1}{4}(n-1)^2.
\end{align*}

If $n=3$, ${\rm GKdim}\:L(\lambda)=\dim(\mathfrak{u})$. 

If $n>3$, ${\rm GKdim}\:L(\lambda)<\dim(\mathfrak{u})$. 

\item $n$ is odd and $p$ is even, then
$$(\lambda+\rho)_{(z_1)}=(\dfrac{1}{2}n+\dfrac{1}{4},\dfrac{1}{2}n-\dfrac{3}{4},\cdots,\dfrac{1}{2}n-\dfrac{1}{2}p+\dfrac{5}{4},\dfrac{1}{2}n-\dfrac{1}{2}p-\dfrac{3}{4},\cdots,\dfrac{3}{4}).$$		
$$(\lambda+\rho)_{(z_2)}=(\dfrac{1}{2}n-\dfrac{1}{4},\dfrac{1}{2}n-\dfrac{5}{4},\cdots,\dfrac{1}{2}n-\dfrac{1}{2}p+\dfrac{3}{4},\dfrac{1}{2}n-\dfrac{1}{2}p-\dfrac{1}{4},\cdots,\dfrac{1}{4}).$$	
\begin{align*}
	{\rm GKdim}\:L(\lambda)&=n^2-(1+2+\cdots+\dfrac{1}{2}n-\dfrac{1}{2})-(1+2+\cdots+\dfrac{1}{2}n-\dfrac{3}{2})\\&=n^2-\dfrac{1}{4}(n-1)^2.
\end{align*}
	
\end{enumerate}	
\item When $z=-\dfrac{3}{2}$, we will have
$$\lambda+\rho=(-\dfrac{1}{2}n+\dfrac{7}{4},-\dfrac{1}{2}n+\dfrac{9}{4},\cdots,-\dfrac{1}{2}n+\dfrac{1}{2}p+\dfrac{5}{4},-\dfrac{1}{2}n+\dfrac{1}{2}p+\dfrac{1}{4},\cdots,\dfrac{1}{4}).$$	
	Also we will divide the discussion into four cases:
\begin{enumerate}	
\item If $n$ is even and $p$ is odd, then
$$ (\lambda+\rho)_{(z_1)}=(-\dfrac{1}{2}n+\dfrac{7}{4},-\dfrac{1}{2}n+\dfrac{11}{4},\cdots,-\dfrac{1}{2}n+\dfrac{1}{2}p+\dfrac{5}{4},-\dfrac{1}{2}n+\dfrac{1}{2}p+\dfrac{1}{4},\cdots,\dfrac{1}{4}).$$
$$ (\lambda+\rho)_{(z_2)}=(-\dfrac{1}{2}n+\dfrac{9}{4},-\dfrac{1}{2}n+\dfrac{13}{4},\cdots,-\dfrac{1}{2}n+\dfrac{1}{2}p+\dfrac{3}{4},-\dfrac{1}{2}n+\dfrac{1}{2}p+\dfrac{3}{4},\cdots,-\dfrac{3}{4}).$$
$${\rm GKdim}\:L(\lambda)=n^2-1=\dim(\fru).$$

\item If $n$ is even and $p$ is even, then	
$$p(\lambda+\rho)_{(z_1)}=(\dfrac{1}{2}n)\text{~and~}p(\lambda+\rho)_{(z_2)}=(\dfrac{1}{2}n-1,1).$$
$${\rm GKdim}\:L(\lambda)=n^2-1=\dim(\fru).$$

\item  If $n$ isodd and $p$ is even, then
$$p(\lambda+\rho)_{(z_1)}=(\dfrac{1}{2}n-\dfrac{1}{2})\text{~and~}p(\lambda+\rho)_{(z_2)}=(\dfrac{1}{2}n-\dfrac{1}{2},1).$$
$${\rm GKdim}\:L(\lambda)=n^2-1=\dim(\fru).$$		
\end{enumerate}

All in all, $z=-\dfrac{3}{2}$ is an irreducible point.
\end{enumerate}

	\item If $z\notin\mathbb{Z}$ and $z\notin\dfrac{1}{2}+\mathbb{Z}$, then	
	\begin{align}\label{6}
		{\rm GKdim}\:L(\lambda)=n^2-\frac{2\cdot 1}{2}=n^2-1=\dim(\mathfrak{u}).
	\end{align}   	
\end{enumerate}

In this case, $M_{I}(\lambda)$ is irreducible.\\

	\item If $p=n$, then
\begin{align*}
	\lambda+\rho=((n-1)z+n-\frac{1}{2},(n-2)z+n-\frac{3}{2},\dots,\frac{1}{2}).
\end{align*}	
\begin{enumerate}
	\item If $z\in\mathbb{Z}$, we will have the follows.
\begin{enumerate}
	\item When $z\ge 0$, then $M_{I}(\lambda)$ is reducible by (\ref{3}).

    \item When $z=-1$, then
$$
	(\lambda+\rho)^-=(\underbrace{\frac{1}{2},\dots,\frac{1}{2}}_{p},\underbrace{-\frac{1}{2},\dots,-\frac{1}{2}}_{p}),
$$	
and
$$
	p(\lambda+\rho)^-=(n,n).
$$	
\begin{enumerate}
		\item If $n$ is even, then
$$
	p((\lambda+\rho)^-)^{odd}=(\frac{n}{2},\frac{n}{2}),
$$	
$$
	{\rm GKdim}\: L(\lambda)=n^2-(0\cdot\frac{n}{2}+\frac{n}{2})=n^2-\frac{n}{2}<\dim(\mathfrak{u}).
$$

    \item If $n$ is odd, then
$$
	p((\lambda+\rho)^-)^{odd}=(\frac{n-1}{2},\frac{n+1}{2}),
$$	
$$
	{\rm GKdim}\: L(\lambda)=n^2-(0\cdot\frac{n-1}{2}+\frac{n+1}{2})=n^2-\frac{n+1}{2}<\dim(\mathfrak{u}).
$$
\end{enumerate}
All in all, $z=-1$ is the diagonal-reducible point.\\

    \item When $z=-2$, then
$$
	(\lambda+\rho)^-=(-n+\frac{3}{2},-n+\frac{5}{2},\dots,\frac{1}{2},-\frac{1}{2},\dots,n-\frac{5}{2},n-\frac{3}{2}),
$$	
$$
	p((\lambda+\rho)^-)^{odd}=(n-1,1),
$$	
$$
	{\rm GKdim}\:L(\lambda)=n^2-(0\cdot (n-1)+1)=n^2-1=\dim(\mathfrak{u}).
$$

All in all, $z=-1$ is the irreducible point.				
\end{enumerate}	
\item $z\in\dfrac{1}{2}+\mathbb{Z}$, the process is similar to the case of $p = n$, and we omit this part of the process.

\item If $z\notin\mathbb{Z}$ and $z\notin\dfrac{1}{2}+\mathbb{Z}$, $M_{I}(\lambda)$ is irreducible by (\ref{6}).	
\end{enumerate}	
\end{enumerate}	
	
This concludes the proof of Proposition \ref{7}.	
\end{proof}

\begin{prop}
		Let $ \mathfrak{g}=\mathfrak{sp}(2n,\mathbb{C})$ $(n>2)$. $M_{I}(\lambda)$ is reducible if and only if
\begin{enumerate}
	\item When $z\in\mathbb{Z}$, then
	\begin{enumerate}	
	\item If $p=1$, then $z\in\mathbb{Z}_{\ge 0}$.
	
	\item If $1<p\le n$, then $z\in -1+\mathbb{Z}_{\ge 0}$.		
    \end{enumerate}
    \item When $z\in\dfrac{1}{2}+\mathbb{Z}$, then	$z\in-\dfrac{1}{2}+\mathbb{Z}_{\ge 0}.$
    
\end{enumerate}		
\end{prop}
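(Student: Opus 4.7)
The plan is to mirror the structure of Propositions~\ref{4} and~\ref{7}, using Theorem~\ref{GKdim}(2) (the $\mathfrak{sp}$ formula) in place of the corresponding $A_n$ and $B_n$ formulas. First I would record the explicit form of $\lambda+\rho$: for type $C_n$ one has $\omega_p = e_1+\cdots+e_p$ for every $p$ and $\rho=(n,n-1,\ldots,1)$, so $\widehat{\omega_p}=(n-1,\ldots,n-p,n-p,n-p-1,\ldots,1)$ when $p<n$ (with the final entry replaced by $0$ when $p=n$). Hence $(\lambda+\rho)_i = z(n-i)+(n+1-i)$ for $i\le p$ and $(\lambda+\rho)_i = (z+1)(n+1-i)$ for $i>p$. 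Since $|\Delta^+(C_n)|=n^2$, the dimension lemma preceding Proposition~\ref{4} yields $\dim(\mathfrak{u})=n^2-1$, so by Lemma~\ref{reducible} irreducibility of $M_I(\lambda)$ is equivalent to $\mathrm{GKdim}\,L(\lambda)=n^2-1$.

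By Lemma~\ref{GKdown} it suffices to locate the first diagonal-reducible point in each residue class of $z$ modulo $\mathbb{Z}$; the three classes to analyze are $z\in\mathbb{Z}$, $z\in\frac{1}{2}+\mathbb{Z}$, and the remaining generic values. For a generic $z$ the only integer-related pair in $\lambda+\rho$ is $(\lambda+\rho)_p$ and $(\lambda+\rho)_{p+1}$ (their difference is exactly $1$), which forms a single nontrivial block in $[\lambda]_3$ whose associated $\tilde{x}$ is a strictly decreasing two-element sequence with RS shape $(1,1)$, so $F_A(\tilde{x})=1$; all other blocks are singletons. Theorem~\ref{GKdim}(2) then gives $\mathrm{GKdim}\,L(\lambda)=n^2-1=\dim(\mathfrak{u})$, forcing irreducibility on this class.

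For $z\in\mathbb{Z}$ I would reduce via Lemma~\ref{GKdown} to the candidates $z=-1$ and $z=-2$ (the cases $z\ge 0$ give a strictly decreasing $\lambda+\rho$, hence $\mathrm{GKdim}\,L(\lambda)=0$ by~(\ref{3}), so reducibility). At $z=-1$ one finds $\lambda+\rho=(\underbrace{1,\ldots,1}_{p},\underbrace{0,\ldots,0}_{n-p})$, whence $(\lambda+\rho)^-$ coincides with the sequence appearing in the $B_n$ analysis of Proposition~\ref{7}; the four parity subcases computed there apply verbatim and yield $\mathrm{GKdim}\,L(\lambda)=n^2-1$ for $p=1$ (irreducible) and $\mathrm{GKdim}\,L(\lambda)<n^2-1$ for $p\ge 2$ (reducible). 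At $z=-2$ the RS shape of $(\lambda+\rho)^-$ is $(2n-1,1)$ with $F_B=1$, giving $\mathrm{GKdim}\,L(\lambda)=n^2-1$, which confirms $z=-1$ as the first diagonal-reducible point whenever $p\ge 2$.

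The substantive new case is $z\in\frac{1}{2}+\mathbb{Z}$, where Theorem~\ref{GKdim}(2) uses $F_D$ (the even-box count) on $(\lambda+\rho)_{(1/2)}^-$ in place of the $F_B$ used in the $B_n$ setting. At $z=-\frac{1}{2}$, the entries of $\lambda+\rho$ alternate between integer and half-integer values according to the parities of $n-i$ (for $i\le p$) and $n+1-i$ (for $i>p$), producing four parity subcases indexed by $(n\bmod 2,\,p\bmod 2)$. In each subcase I would write down $(\lambda+\rho)_{(0)}$ and $(\lambda+\rho)_{(1/2)}$ explicitly, perform RS insertion on their antisymmetrizations, and evaluate $F_B+F_D$; the target is to show this sum strictly exceeds $1$ in every subcase, yielding $\mathrm{GKdim}\,L(\lambda)<n^2-1$ and reducibility uniformly, which accounts for the absence of parity-dependent alternatives in the statement (in contrast to Proposition~\ref{7}). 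At $z=-\frac{3}{2}$ the analogous bookkeeping should yield $F_B+F_D=1$, so $\mathrm{GKdim}\,L(\lambda)=n^2-1$, pinning $z=-\frac{1}{2}$ as the first diagonal-reducible point in this progression. The case $p=n$ is handled in the same way with only minor boundary adjustments due to the $0$ at the end of $\widehat{\omega_n}$. The main obstacle will be the four-way parity bookkeeping at $z=-\frac{1}{2}$: verifying that the $F_D$ contribution from the half-integer block combines with the $F_B$ contribution from the integer block to exceed $1$ in all parities of $(n,p)$ is precisely what produces the uniform (parity-independent) statement, and it is the substance of the proof.
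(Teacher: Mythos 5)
Your proposal is correct and follows essentially the same route as the paper: the paper likewise disposes of integral $z$ by observing that the $z=-1$ and $z=-2$ computations coincide verbatim with the $B_n$ case (both use $F_B$ on the same sequence $(1,\dots,1,0,\dots,0)^-$), handles generic $z$ via the single block $\{(\lambda+\rho)_p,(\lambda+\rho)_{p+1}\}$ in $[\lambda]_3$, and for $z\in\frac12+\mathbb{Z}$ simply records the values $\mathrm{GKdim}\,L(\lambda)=n^2-(\frac12n^2-\frac12n)-1$ ($p$ odd) or $n^2-(\frac12n^2-\frac12n)$ ($p$ even) at $z=-\frac12$ and $n^2-1$ at $z=-\frac32$, exactly the computation you outline. (Only trivial quibble: at $z=-2$ the RS shape of $(\lambda+\rho)^-$ is $(2n-2,2)$ rather than $(2n-1,1)$, but $F_B=1$ either way, so the conclusion is unaffected.)
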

\begin{proof}
	When $z\in\mathbb{Z}$, the process of type $B_n$ and type $C_n$ is the same, and we omit this part of the process and see the details in Proposition \ref{7}.
	
	For the case when $z\in\dfrac{1}{2}+\mathbb{Z}$, we can get that 
	\[
	{\rm GKdim}\:L(\lambda)=\left\{
	\begin{array}{ll}
		n^2-(\frac{1}{2}n^2-\frac{1}{2}n)-1 &\text{if $p$ is  odd}\\	    	  	
		n^2-(\frac{1}{2}n^2-\frac{1}{2}n) &\text{if $p$ is even}	
	\end{array}	
	\right.
	\]
for $z=-\dfrac{1}{2}$ and ${\rm GKdim}\:L(\lambda)=n^2-1$ for $z=-\dfrac{3}{2}$. It is not difficult to draw the conclusion.

When $z\notin\mathbb{Z}$ and $z\notin\dfrac{1}{2}+\mathbb{Z}$, $M_{I}(\lambda)$ is irreducible by (\ref{6}).

\end{proof}

\begin{prop}
		Let $ \mathfrak{g}=\mathfrak{so}(2n,\mathbb{C})$ $(n>2)$. $M_{I}(\lambda)$ is reducible if and only if
		\begin{enumerate}
			\item If $1\le p\le n-2$, then $z\in-1+\dfrac{1}{2}\mathbb{Z}_{\ge 0}$.
			
			\item If $p=n-1{\rm~or~}n$,
			    \begin{enumerate}
			        \item $n=3$, then $z\in\dfrac{1}{2}\mathbb{Z}_{\ge 0}$.	
			    	
			    	\item $n>3$, then $z\in-1+\dfrac{1}{2}\mathbb{Z}_{\ge 0}$.			    	
			    \end{enumerate}			
		\end{enumerate}
\end{prop}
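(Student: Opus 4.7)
The plan is to follow the template of Propositions~\ref{4} and~\ref{7}. By Lemma~\ref{reducible}, $M_I(\lambda)$ is irreducible iff $\operatorname{GKdim} L(\lambda) = \dim(\mathfrak{u})$, and for type $D_n$ with minimal parabolic one has $\dim(\mathfrak{u}) = |\Delta^+| - 1 = n^2 - n - 1$. By Lemma~\ref{GKdown}, once a reducible point is found in a residue class modulo $\mathbb{Z}$, every larger element of that class is also reducible, so it suffices to locate the first reducible $z$ in each of the two classes $\mathbb{Z}$ and $\tfrac12 + \mathbb{Z}$, and verify irreducibility for generic $z \notin \tfrac12\mathbb{Z}$. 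All GK dimensions will be computed via Theorem~\ref{GKdim}(4).

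The first step is to compute $\eta = \widehat{\omega_p} = \rho - \omega_p$ via Lemma~\ref{5}, so that $\lambda + \rho = (z+1)\rho - z\omega_p$. Two regimes arise from the shape of $\omega_p$ in $D_n$: regime~(i), $1 \le p \le n-2$, where $\omega_p = e_1 + \cdots + e_p$ is integral and the entries of $\lambda + \rho$ are $(z+1)(n-i) - z$ for $1 \le i \le p$ and $(z+1)(n-i)$ for $p+1 \le i \le n$, with a unique ``integer-type'' link of size $1$ between the $p$-th and $(p+1)$-th entries; and regime~(ii), $p \in \{n-1, n\}$, where $\omega_p = \tfrac12(e_1 + \cdots + e_{n-1} \mp e_n)$ has half-integer entries and the last two entries of $\lambda + \rho$ are linked (by difference when $p = n-1$, by sum when $p = n$). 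For generic $z \notin \tfrac12\mathbb{Z}$ in either regime, this length-$2$ link is the only non-singleton block in $[\lambda]_3$ and $F_A(\tilde x) = 1$, so $\operatorname{GKdim} L(\lambda) = n^2 - n - 1 = \dim(\mathfrak{u})$; hence $M_I(\lambda)$ is irreducible.

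In regime~(i) the critical values $z = -1, -2$ (integer) and $z = -\tfrac12, -\tfrac32$ (half-integer) are analysed by writing down $(\lambda+\rho)^-_{(0)}$ or $(\lambda+\rho)^-_{(1/2)}$ explicitly and applying the RSK algorithm, then reading off $p^{\,ev}$ via~(\ref{eq:ev-od}). At $z = -1$ one has $\lambda + \rho = (1^p, 0^{n-p})$, whence $(\lambda+\rho)^- = (1^p, 0^{2(n-p)}, (-1)^p)$; splitting by the parities of $n$ and $p$ as in Proposition~\ref{7} yields an explicit three-row RSK shape with $F_D > 1$, giving reducibility, while $z = -2$ produces a simpler shape with $F_D = 1$ and irreducibility. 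The analysis at $z = -\tfrac12$ and $z = -\tfrac32$ is parallel. These computations pinpoint $z = -1$ and $z = -\tfrac12$ as the two first reducible points; their union is $-1 + \tfrac12 \mathbb{Z}_{\ge 0}$, proving~(1).

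The main obstacle is regime~(ii) and its exceptional behaviour at $n = 3$. For $z \in \mathbb{Z}$ the entries of $\lambda + \rho$ lie in $\tfrac12 + \mathbb{Z}$, so $F_D$ applies to $(\lambda+\rho)^-_{(1/2)}$; for $z \in \tfrac12 + \mathbb{Z}$ the entries lie in $\pm\tfrac14 + \mathbb{Z}$ and form a single block of $[\lambda]_3$, so the $F_A(\tilde x)$ contribution must be tracked instead. At $z = -1$ one computes $\lambda + \rho = (\tfrac12, \ldots, \tfrac12, \mp\tfrac12)$, and a direct RSK calculation on the symmetrised sequence gives the shape $(n, n)$, so $F_D = \lfloor n/2 \rfloor$. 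For $n > 3$ this is $\ge 2$ and $z = -1$ is reducible, yielding~(2)(b) after verifying $z = -2$ and $z = -\tfrac32$ are irreducible; for $n = 3$ one instead gets $F_D = 1$, so $z = -1$ is irreducible and the first integer reducible point shifts to $z = 0$, with the analogous shift in the half-integer class giving~(2)(a). In all cases Lemma~\ref{GKdown} propagates reducibility through the arithmetic progressions, completing the proof.
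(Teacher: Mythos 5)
Your outline follows the same route as the paper: the same split into $1\le p\le n-2$ versus $p\in\{n-1,n\}$, the same critical values $z=-1,-2,-\tfrac12,-\tfrac32$, and the same use of Lemmas \ref{GKdown} and \ref{reducible} together with Theorem \ref{GKdim}(4). However, two steps do not close as written. The more serious one is the case $p\in\{n-1,n\}$, $z\in\tfrac12+\mathbb{Z}$, which you dismiss as ``the analogous shift in the half-integer class giving (2)(a)''---but this is exactly where the $n=3$ exception lives, and your own (correct) observation that at $z=-\tfrac12$ all $n$ entries of $\lambda+\rho$ lie in $\pm\tfrac14+\mathbb{Z}$ and form a \emph{single} block $x\in[\lambda]_3$ forces the contribution to be $F_A(\tilde{x})$ for one merged sequence $\tilde{x}=(y,-\mathrm{rev}(z'))$, not a sum of two separate $F_A$'s over the two congruence classes modulo $1$. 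Here $\tilde{x}$ comes out strictly decreasing of length $n$, so $F_A(\tilde{x})=\binom{n}{2}$ and $\operatorname{GKdim}L(\lambda)=n^2-n-\binom{n}{2}<n^2-n-1$ for all $n>2$, including $n=3$ (one can check this against $D_3\cong A_3$: the weight becomes $(2,1,\tfrac12,0)$ for $\mathfrak{sl}_4$ and the type-$A$ formula also gives GK dimension $3<5$). So the computation you describe does not produce the irreducibility of $z=-\tfrac12$ when $n=3$ that statement (2)(a) requires; you must either justify why the two sub-classes may be treated additively (as the paper implicitly does) or carry out this case explicitly, and at present the step simply fails.

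Second, your irreducibility claim for all $z\notin\tfrac12\mathbb{Z}$ rests on ``this length-$2$ link is the only non-singleton block in $[\lambda]_3$.'' That is true for irrational $z$, but for rational $z$ with denominator at least $3$ the entries $(z+1)(n-i)$, $i>p$, distribute over several congruence classes modulo $1$ and create further non-singleton blocks (already for $z=\tfrac13$ and $n-p\ge 4$), each contributing a positive $F_A$ and pushing the Gelfand--Kirillov dimension strictly below $n^2-n-1$. Hence the blanket assertion that every $z\notin\tfrac12\mathbb{Z}$ is an irreducible point is not established by your argument; the paper's appeal to (\ref{6}) has the same limitation, so this is a shared gap rather than a deviation, but it still needs to be addressed (e.g.\ by restricting the claim or by a separate analysis of rational non-half-integral $z$).
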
  
 \begin{proof}
 		Take $\mathfrak{g}=\mathfrak{so}(2n,\mathbb{C})$, and $\Delta^+(\frl)=\{\alpha_{p}\}$.
 \begin{enumerate}	
 	\item If $1\le p\le n-2$, we have
 	\begin{align*}
 		\eta&=\rho-w_p\\&=(n-2,\dots,n-p-1,n-p-1,\dots,0).
 	\end{align*}
 $$
 		\lambda+\rho=((n-2)z+n-1,\dots,(n-p-1)z+n-p,(n-p-1)z+n-p-1,\dots,0).
$$
 \begin{enumerate}
 	\item If $z\in\mathbb{Z}$, we will have the follows.
       \begin{enumerate}
       	    \item When $z\ge 0$, then $M_{I}(\lambda)$ is reducible by (\ref{3}).
       	    
       	    \item When $z=-1$, then
       	    $$
       	    	(\lambda+\rho)^-=(\underbrace{1,\dots,1}_{p},\underbrace{0,0,\dots,0}_{2n-2p},\underbrace{-1,\dots,-1}_{p}).
       	    $$
       	    \begin{enumerate}
       	\item $n-p\ge p$ and $p$ is even, then
       	$$
       		p((\lambda+\rho)^-)^{ev}=(n-p,\frac{p}{2},\frac{p}{2}),
       	$$
       	\begin{align*}
       		{\rm GKdim}\:L(\lambda)&=n^2-n-(0\cdot (n-p)+\frac{p}{2}+2\cdot \frac{p}{2})\\&=n^2-n-\frac{3p}{2}<n^2-n-1=\dim(\mathfrak{u}).
       	\end{align*} 
       	
       	\item $n-p\ge p$ and $p$ is odd, then
       		$$
       		p((\lambda+\rho)^-)^{ev}=(n-p,\frac{p-1}{2},\frac{p+1}{2}),
       $$
       	\begin{align*}
       		{\rm GKdim}\:L(\lambda)&=n^2-n-(0\cdot (n-p)+\frac{p-1}{2}+2\cdot \frac{p+1}{2})\\&=n^2-n-\frac{3p+1}{2}<n^2-n-1=\dim(\mathfrak{u}).
       	\end{align*} 
       	
       	\item $n-p< p$ and $p$ is even, then
       	$$
       		p((\lambda+\rho)^-)^{ev}=(\frac{p}{2},\frac{p}{2},n-p),
       $$
       	\begin{align*}
       		{\rm GKdim}\:L(\lambda)&=n^2-n-(0\cdot \frac{p}{2}+ \frac{p}{2}+2(n-p))\\&=n^2-n-\frac{p}{2}-2(n-p)<n^2-n-1=\dim(\mathfrak{u}).
       	\end{align*} 
       
       	\item $n-p< p$ and $p$ is odd, then
       $$
       	p((\lambda+\rho)^-)^{ev}=(\frac{p+1}{2},\frac{p-1}{2},n-p),
       $$
       \begin{align*}
       	{\rm GKdim}\:L(\lambda)&=n^2-n-(0\cdot \frac{p+1}{2}+ \frac{p-1}{2}+2(n-p))\\&=n^2-n-\frac{p-1}{2}-2(n-p)<n^2-n-1=\dim(\mathfrak{u}).
       \end{align*} 
   \end{enumerate}
   Hence $z=-1$ is a diagonal-reducible point.\\
   
   \item When $z=-2$, then
$$
   	(\lambda+\rho)^-=(-n+3,\dots,-n+p+2,-n+p+1,\dots,0,0,\dots,n-p-1,n-p-2,\dots,n-3).
$$
 $$p(\lambda+\rho)^-=(2n-2,2),$$  
 and ${\rm GKdim}\:L(\lambda)=n^2-n-(0\cdot(n-1)+1)=n^2-n-1=\dim(\mathfrak{u}).$

         \end{enumerate} 
     
     \item If $z\in\dfrac{1}{2}+\mathbb{Z}$, we will have the follows.     
   \begin{enumerate}  
     \item When $z>-\dfrac{1}{2}$, $M_{I}(\lambda)$ is reducible.
     
     \item When $z=-\dfrac{1}{2}$, then
 $$\lambda+\rho-(\dfrac{1}{2}n,\cdots,\dfrac{1}{2}n-\dfrac{1}{2}p+\dfrac{1}{2},\dfrac{1}{2}n-\dfrac{1}{2}p-\dfrac{1}{2},\cdots,0).$$    
   	We divide the discussion into four cases:
   	  \begin{enumerate}  
     \item If $n$ is even and $p$ is odd, then
     $$(\lambda+\rho)_{(0)}=(\dfrac{1}{2}n,\dfrac{1}{2}-1,\cdots,\dfrac{1}{2}n-\dfrac{1}{2}p+\dfrac{1}{2},\dfrac{1}{2}n-\dfrac{1}{2}p-\dfrac{1}{2},\cdots,0).$$
     $$(\lambda+\rho)_{(\frac{1}{2})}=(\dfrac{1}{2}n-\dfrac{1}{2},\cdots,\dfrac{1}{2}-\dfrac{1}{2}p,\dfrac{1}{2}n-\dfrac{1}{2}p+-1,\cdots,\dfrac{1}{2}).$$
     $$p(\lambda+\rho)^-_{(0)}=(2,\underbrace{1,\cdots,1}_n)\text{~and~}p(\lambda+\rho)^-_{(\frac{1}{2})}=(\underbrace{1,\cdots,1}_{n-2}).$$
  \begin{align*}
  	{\rm GKdim}\:L(\lambda)&=n^2-n-(2+4+\cdots+n)-(2+4+\cdots+n-4)\\&=n^2-n-(\dfrac{1}{2}n^2-n+2).
  \end{align*}   
     
    \item If $n$ is even and $p$ is even, then 
      $$p(\lambda+\rho)^-_{(0)}=(2,\underbrace{1,\cdots,1}_{n-2})\text{~and~}p(\lambda+\rho)^-_{(\frac{1}{2})}=(\underbrace{1,\cdots,1}_{n}).$$
      \begin{align*}
     	{\rm GKdim}\:L(\lambda)&=n^2-n-(2+4+\cdots+n-2)-(2+4+\cdots+n-2)\\&=n^2-n-(\dfrac{1}{2}n^2-n).
     \end{align*}  
 
  \item If $n$ is odd and $p$ is even, then 
 $$p(\lambda+\rho)^-_{(0)}=(2,\underbrace{1,\cdots,1}_{n-1})\text{~and~}p(\lambda+\rho)^-_{(\frac{1}{2})}=(\underbrace{1,\cdots,1}_{n-1}).$$
 \begin{align*}
 	{\rm GKdim}\:L(\lambda)&=n^2-n-(2+4+\cdots+n-1)-(2+4+\cdots+n-2)\\&=n^2-n-\dfrac{1}{2}(n-1)^2.
 \end{align*}  
 
 \item If $n$ is odd and $p$ is odd, then 
 $$p(\lambda+\rho)^-_{(0)}=(2,\underbrace{1,\cdots,1}_{n-3})\text{~and~}p(\lambda+\rho)^-_{(\frac{1}{2})}=(\underbrace{1,\cdots,1}_{n+1}).$$
 \begin{align*}
 	{\rm GKdim}\:L(\lambda)&=n^2-n-(2+4+\cdots+n-3)-(2+4+\cdots+n-1)\\&=n^2-n-\dfrac{1}{2}(n-1)^2.
 \end{align*}    
      \end{enumerate} 
 \item When $z=-\dfrac{3}{2}$, we can easily check that  $	{\rm GKdim}\:L(\lambda)=n^2-n-1=\dim\fru.$ Then $z=-\dfrac{3}{2}$ is an irreducible point.

   \end{enumerate}
  
   \item If $z\notin\mathbb{Z}$ and $z\notin\dfrac{1}{2}+\mathbb{Z}$, we have $M_{I}(\lambda)$ is irreducible  by (\ref{6}). 
   
   All in all, $M_{I}(\lambda)$ is reducible if and only if $z\in -1+\mathbb{Z}_{\ge 0}$.\\
 \end{enumerate}
	
 	\item If $p=n-1$, then
 	\begin{align*}
 		\eta=\rho-w_{n-1}=(n-\frac{3}{2},n-\frac{5}{2},\dots,\frac{1}{2},\frac{1}{2}),
 	\end{align*}
$$
 	\lambda+\rho=((n-\frac{3}{2})z+n-1,(n-\frac{5}{2})z+n-2,\dots,\frac{1}{2}z+1,\frac{1}{2}z).
 $$
\begin{enumerate}
	\item If $z\in\mathbb{Z}$, we will have the follows.
	\begin{enumerate}
		\item When $z\ge 0$, then $M_{I}(\lambda)$ is reducible by (\ref{3}).
		
		\item When $z=-1$, then
	$$
			(\lambda+\rho)^-=(\underbrace{\frac{1}{2},\dots,\frac{1}{2}}_{n-1}-\frac{1}{2},\frac{1}{2},\underbrace{-\frac{1}{2},\dots,-\frac{1}{2}}_{n-1}).
	$$
	\begin{enumerate}
			\item When $n$ is even, then
			$$
				p((\lambda+\rho)^-)^{ev}=(\frac{n}{2},\frac{n}{2}),
		$$
			$$
				{\rm GKdim}\:L(\lambda)=n^2-n-(0\cdot \frac{n}{2}+ \frac{n}{2})=n^2-n-\frac{n}{2}<n^2-n-1=\dim(\mathfrak{u}).
		$$ 
		
		    \item When $n$ is odd, then
		   $$
		    	p((\lambda+\rho)^-)^{ev}=(\frac{n+1}{2},\frac{n-1}{2}),
		   $$
		 $$
		    	{\rm GKdim}\:L(\lambda)=n^2-n-(0\cdot \frac{n+1}{2}+ \frac{n-1}{2})=n^2-n-\frac{n-1}{2}.
		   $$
	    \end{enumerate}
    
			If $n=3$, ${\rm GKdim}\:L(\lambda)=\dim(\mathfrak{u})$.
			
			If $n>3$, ${\rm GKdim}\:L(\lambda)<\dim(\mathfrak{u})$.
			
	\item When $z=-2$, then
$$
		(\lambda+\rho)^-=(-n+2,\dots,0,-1,1,0,\dots,n-2),
$$
$$
		p((\lambda+\rho)^-)^{ev}=(n-1,1),
$$
	\begin{align*}
		{\rm GKdim}\:L(\lambda)&=n^2-n-(0\cdot (n-1)+ 1)\\&=n^2-n-1=\dim(\mathfrak{u}).
	\end{align*}   	
	So $z=-2$ is an irreducible point.\\	
    \end{enumerate}

\item If $z\in\dfrac{1}{2}+\mathbb{Z}$, we can get that 
\[
{\rm GKdim}\:L(\lambda)=\left\{
\begin{array}{ll}
	n^2-n-(\frac{1}{4}n^2-\frac{1}{2}n)-1 &\text{if $n$ is  even}\\	    	  	
	n^2-n-\frac{1}{4}(n-1)^2 &\text{if $n$ is odd}
\end{array}	
\right.
\]
for $z=-\dfrac{1}{2}$ and  ${\rm GKdim}\:L(\lambda)=n^2-n-1$ for  $z=-\dfrac{3}{2}$. It is not difficult to draw the conclusion.

	\item If $z\notin\mathbb{Z}$ and $z\notin\dfrac{1}{2}+\mathbb{Z}$, $M_{I}(\lambda)$ is irreducible	by (\ref{6}).\\
\end{enumerate}	
 	\item If $p=n$, the process is roughly the same as the case when $p=n-1$, and we omit the process. 		
 \end{enumerate}				
 \end{proof}
  
\section*{Acknowledgements} We would like to thank the anonymous referees for providing many	constructive comments and helping in improving the contents of our paper.  
  
\bibliographystyle{plain}
\bibliography{reference.bib}

\end{document}